\documentclass[pdflatex,sn-mathphys-num]{sn-jnl}

\usepackage{graphicx}
\usepackage{multirow}
\usepackage{amsmath,amssymb,amsfonts}
\usepackage{amsthm}
\usepackage{mathrsfs}
\usepackage[title]{appendix}
\usepackage{xcolor}
\usepackage{textcomp}
\usepackage{manyfoot}
\usepackage{booktabs}
\usepackage{algorithm}
\usepackage{algorithmicx}
\usepackage{algpseudocode}
\usepackage{listings}

\usepackage{hyperref}
\newcommand{\orcidlink}[1]{\href{https://orcid.org/#1}{\includegraphics[width=10pt]{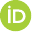}}}

\theoremstyle{thmstyleone}
\newtheorem{theorem}{Theorem}
\newtheorem{proposition}[theorem]{Proposition}

\newtheorem{cond}[theorem]{Condition}

\theoremstyle{thmstyletwo}

\newtheorem{remark}{Remark}

\theoremstyle{thmstylethree}

\begin{document}

\title[Host--Parasitoid Dynamics and Biological Control of the Sugarcane Borer]{Host--Parasitoid Dynamics and Biological Control of the Sugarcane Borer}

\author[1]{\fnm{Fredson} \sur{Aguiar}\,\orcidlink{0000-0002-2074-4504}}
\author*[1]{\fnm{M. Soledad} \sur{Aronna}\,\orcidlink{0000-0002-0640-4722}}\email{soledad.aronna@fgv.br}
\author[2]{\fnm{Roberto} \sur{Guglielmi}\,\orcidlink{0000-0002-4443-2695}}\author[1]{\fnm{Beatriz} \sur{Laiate}\,\orcidlink{0000-0002-7312-4843}}\author[3]{\fnm{Alexandre}\sur{Molter}\,\orcidlink{0000-0001-8562-6376}}

\affil*[1]{\orgdiv{School of Applied Mathematics}, \orgname{Getulio Vargas Foundation}, \orgaddress{
\city{Rio de Janeiro},
\country{Brazil}
}}
\affil[2]{\orgdiv{Department of Applied Mathematics}, \orgname{University of Waterloo}, \orgaddress{
\city{Waterloo},
\country{Canada}
}}
\affil[3]{\orgdiv{Department of Mathematics and Statistics}, \orgname{Federal University of Pelotas}, \orgaddress{
\city{Pelotas},
\country{Brazil}
}}

\abstract{
We investigate biological pest control strategies for the sugarcane borer \textit{Diatraea saccharalis} through the combined action of two parasitoid species: the egg parasitoid \textit{Trichogramma galloi} and the larval parasitoid \textit{Cotesia flavipes}. We describe the population dynamics with a six-dimensional host–parasitoid model in which host–parasitoid interactions are represented through a Holling Type II functional response, extending previous models by coupling egg and larval stage dynamics and incorporating parasitism saturation observed in laboratory experiments. We characterize the equilibrium structure of the model and analyze the local stability of the extinction equilibrium. Bifurcation analysis reveals that, for a wide range of Holling parameters, the pest population exceeds the economic damage threshold, motivating the design of active control strategies. We formulate and compare three biological control approaches: open-loop optimal control, State-Dependent Riccati Equation (SDRE) feedback control, and impulsive feedback control based on Lyapunov arguments. We perform numerical simulations to show that all three strategies successfully keep the pest population below the economic damage threshold. The impulsive strategy, in particular, achieves effective suppression with substantially fewer parasitoid releases than the continuous approaches, making it the most practically viable option for field implementation.}

\keywords{Host-parasitoid System, Sugarcane Borer, Nonlinear Dynamics, Economic Damage Threshold, Biological Control, Optimal Control, Impulsive Control, Feedback Stabilization
}

\pacs[MSC Classification]{92D25, 49J15, 93D15, 37G10, 92D40}

\maketitle

\section{Introduction}

Economic interest in sugarcane production has grown significantly in recent decades, driven primarily by its key role in sugar and ethanol production. This expansion has been accompanied by increasing pest pressure on crops, with the sugarcane borer {\em Diatraea saccharalis} representing the most economically significant threat. Damage begins as early as the third month after planting, during the tillering stage, and is further favored by high rainfall and elevated temperatures in spring and summer. Newborn larvae scrape the leaves and bore into the softer tissue of the stalk, creating internal galleries that cause direct damage — including sprout death, leaf desiccation, weight loss, and atrophy — as well as indirect damage through yeast contamination, which leads to stem rot, reduced juice purity, and significant industrial yield loss in sugar and ethanol production
\cite{PARRA, Parra2021,Vargas}.

The use of insecticides against the sugarcane borer in the larval stage is unfeasible and ineffective because the larvae are within the stalk of the plant. A viable and efficient alternative is based on biological control that aims to reduce the pest population using sugarcane borer parasitoids in different stages of the borer development. Two main parasitoids have been successfully deployed in Brazil and other sugarcane-producing countries for the biological control of the sugarcane borer~\cite{PARRA,PARRA2}: the wasp {\it Trichogramma galloi} ({\it T. galloi}) for the egg stage of the borer, and  the wasp {\it Cotesia flavipes} ({\it C. flavipes}) for its larval stage. These two parasitoids are mass-reared in laboratories or biofactories and made available to be released in sugarcane plantations, where they parasitize the eggs and larvae of the sugarcane borer by laying inside them their own eggs, which leads to a reduction of the overall pest population.

This work proposes a novel mathematical model for the dynamic interactions between the sugarcane borer and its two parasitoids, the egg-stage parasitoid {\it T. galloi} and the larval-stage parasitoid {\it C. flavipes}. This interaction is described by a compartmental host-parasitoid model, where the parasitism is expressed in terms of a Holling Type II functional response~\cite{Holling,Royama,Rogers,Pritchard}. 
Moreover, we develop a systematic analysis of the biological control of the sugarcane borer by means of controlled releases of its two parasitoids.

Biological control strategies for agricultural pests have increasingly
incorporated mathematical modelling to support decision-making and optimise
intervention protocols.
For the sugarcane borer \textit{Diatraea saccharalis}, control programmes
typically rely on the sequential release of parasitoids targeting different
developmental stages of the host: the egg parasitoid \textit{T.~galloi} and
the larval parasitoid \textit{C.~flavipes}, which attack the egg and larval
stages, respectively~\cite{MOLNAR}.
The theoretical foundation for this two-stage, two-parasitoid structure was
laid by~\cite{Briggs}, and subsequently developed into a stage-structured
host--parasitoid model by~\cite{Molter}, who focused specifically on the
sugarcane borer system.
Building on that framework,~\cite{MAR2} introduced an age-structured model
that incorporates seasonal perturbations to better reflect environmental
variability in the field.
The present article extends the dynamics of~\cite{MAR2} by replacing the
mass-action parasitism term with a Holling Type~II functional response,
which accounts for parasitism saturation at high host densities as observed
in laboratory experiments~\cite{Santos, Santos1}.

In practice, parasitoids are released in discrete quantities at specific
times --- a strategy formalised as impulsive control in Control
Theory~\cite{BARC, Marat2012, TANG} --- and periodic release schedules,
such as weekly applications, are the most operationally feasible option
under field conditions~\cite{Botelho, Gomes, Nava}.
Continuous release frameworks, by contrast, assume that inputs can be
adjusted at every instant. In contrast, continuous Optimal Control yields theoretically minimal-cost
strategies and provides useful lower bounds against which discrete protocols
can be benchmarked~\cite{MOLNAR, Molter}, while the State-Dependent Riccati
Equation (SDRE) method~\cite{MRACEKKK, MRACEK} produces dynamically adjusted
feedback rates without requiring the solution of a global optimization problem.
Both continuous approaches face practical limitations arising from
implementation costs and logistical constraints, motivating hybrid strategies
in which releases are triggered impulsively whenever pest densities exceed the
economic damage threshold, combining operational feasibility with
optimisation-informed timing~\cite{Molter}.
In this study we formulate and compare all three paradigms --- optimal
control, SDRE feedback, and threshold-triggered impulsive release --- within
a unified host--parasitoid model, assessing their effectiveness and
suitability for field implementation.

Summarizing, the main contributions of this article are: (i) a novel six-dimensional host–parasitoid model incorporating Holling Type II functional responses for both parasitoid species and a larva-dependent egg recruitment term; (ii) a rigorous characterisation of the five equilibrium points and an explicit stability criterion for the trivial (pest-free) equilibrium; and (iii) a comparative analysis of three biologically grounded control strategies—optimal control, SDRE feedback, and impulsive feedback—quantified by total parasitoid release and final pest density.

The paper is organised as follows.
Section~\ref{sec:model} introduces the six-dimensional host--parasitoid
model, establishes the existence and local stability of the equilibria,
and carries out a bifurcation analysis with respect to the Holling Type~II
handling-time parameters.
Section~\ref{SecControl} formulates the three biological control
strategies: an optimal control problem solved via Pontryagin's Maximum
Principle, a continuous feedback law derived from the State-Dependent
Riccati Equation (SDRE), and a threshold-triggered impulsive release law
supported by a Lyapunov stability argument.
Section~\ref{Sec:Discussion} presents a numerical comparison of the three
strategies in terms of pest suppression performance, parasitoid release
costs, and robustness to parameter uncertainty.
Section~\ref{SecConclusion} summarises the main findings, discusses their
practical implications for sugarcane borer management programmes, and
outlines directions for future research.

\section{The mathematical model}\label{sec:model}

We now introduce the host-parasitoid system describing the interaction between the sugarcane borer {\em D. saccharalis} and its two parasitoids for its egg- ({\it T. galloi}) and larval-stage ({\it C. ﬂavipes}), respectively, with a Holling Type II functional response \cite{Holling}.
The proposed model is given by:
\begin{equation}
    \begin{split}
    \frac{d x_1}{dt} & = r \sigma_1 x_4 \left(1 - \frac{x_1}{K}\right) - m_1 x_1 - n_1 x_1 - \frac{a_1 x_1x_3}{1 + a_1h_1x_1}, \\
    \frac{d x_2}{dt} &= \frac{a_1 x_1x_3}{1 + a_1h_1x_1} - m_2 x_2 - n_2 x_2, \\
    \frac{d x_3}{dt} &= \gamma_3 n_2 x_2 - m_3 x_3, \\
    \frac{d x_4}{dt} &= n_1 x_1     - m_4 x_4 - n_3 x_4  -  \frac{a_2x_4x_6}{1 + a_2h_2x_4} , \\
    \frac{d x_5}{dt} &= \frac{a_2 x_4x_6}{1 + a_2h_2x_4}  - m_5 x_5 - n_4 x_5, \\
    \frac{d x_6}{dt} &= \gamma_6 n_4 x_5 - m_6 x_6, \label{eq1}    
    \end{split}
\end{equation}
where the state variables and parameters are described in Tables~\ref{tab:populations} and~\ref{table:parameters}, respectively. As a standing assumption of the paper, we assume that all the system's parameters are positive. 
The recruiting term in~\eqref{eq1} is considered proportional to the size of the larvae population, in view of the absence of adult population in the model, and also proportional to $ 1 - \frac{x_1}{K},$ which represents a space/resource limitation in terms of the carrying capacity $K$ for the eggs. The closer $x_1$ is to $K$ the lower is the recruiting term. See further explanation in, {\em e.g.},~\cite{Antunes}.

\begin{table}[!ht] 
    \caption{Description of the state variables of the model}\label{tab:populations}
    \begin{tabular}{c c}
        \toprule
        Variable & Description \\
        \midrule
         $x_1$ & Un-parasitized egg population density of the sugarcane borer \\
         $x_2$ & Parasitized egg population density of the sugarcane borer \\
         $x_3$ & Population density of the adult egg parasitoid {\em T. galloi } \\
         $x_4$ & Un-parasitized larvae population density of the sugarcane borer \\
         $x_5$ & Parasitized larvae population density of the sugarcane borer \\
         $x_6$ & Population density of the adult larvae parasitoid {\em C. flavipes} \\
         \botrule
    \end{tabular}  
\end{table}

\begin{figure}[ht]
    \centering
    \includegraphics[width=1\linewidth]{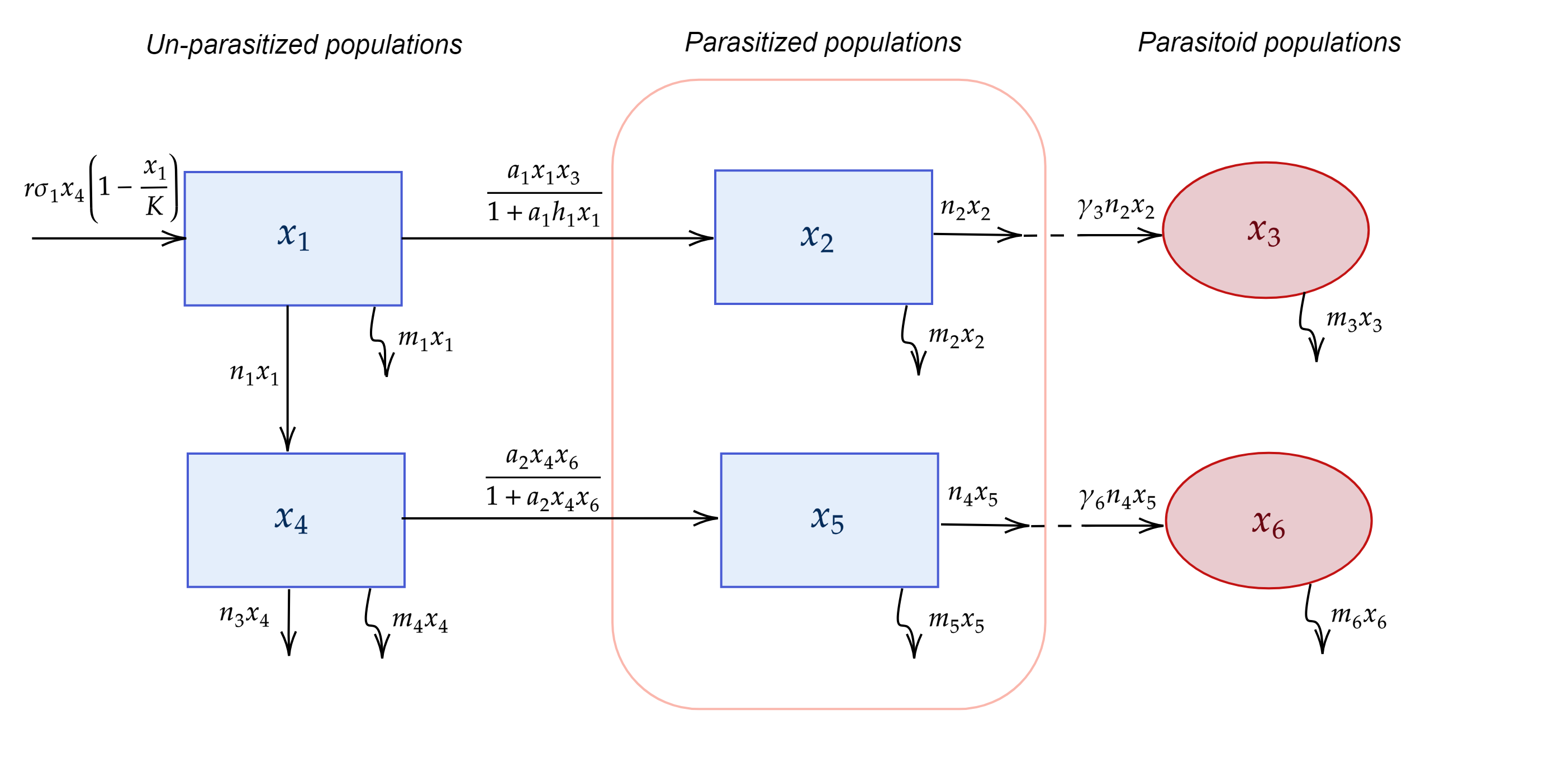}
    \caption{
    Diagram of the host-parasitoid dynamics~\eqref{eq1}. The full arrows represent the contributions between interacting populations, and the curved arrows represent the mortality rates. From left to right, the figure depicts the un-parasitized, the parasitized, and the parasitoids populations, respectively.}
    \label{fig:enter-label}
\end{figure}

\begin{table}[ht]
\centering
\caption{Description of the parameters of the model}\label{table:parameters}
\begin{tabular}{cc}
\toprule
Parameter & Description                  \\ \midrule
$r$                & Intrinsic oviposition rate of female sugarcane borer \\
$K$                & Carrying capacity of egg population \\
$\sigma_1$         &  Proportion of adult female sugarcane borers among larvae \\
$a_1$, $a_2$       &  Effective encounter rate between parasitoid and susceptible host  \\
$h_1$, $h_2$       &  Average handling time per parasitized host \\
$m_1$              & Mortality rate of the un-parasitized egg population \\
$m_2$              & Mortality rate of the parasitized egg population \\
$m_3$              & Mortality rate of the {\em T. galloi} population \\
$m_4$              & Mortality rate of the un-parasitized larvae population \\
$m_5$              & Mortality rate of the parasitized larvae population \\
$m_6$              & Mortality rate of the {\em C. flavipes} population \\
$n_1$              & \begin{tabular}[c]{@{}c@{}}Fraction of the sugarcane borer larvae population which emerges \\ from egg per unit time\end{tabular} \\
$n_2$ & \begin{tabular}[c]{@{}c@{}}Fraction of the parasitized egg population from which larvae \\ parasitoids emerge per unit time\end{tabular} \\
$n_3$              & \begin{tabular}[c]{@{}c@{}}Per-capita loss rate of un-parasitized sugarcane borer larvae \\ due to pupation (larvae exit this stage; pupae are not tracked)\end{tabular} \\
$n_4$ & \begin{tabular}[c]{@{}c@{}}Fraction of the parasitized sugarcane borer larvae from which larvae \\ parasitoid emerge per unit time\end{tabular} \\
$\gamma_3$         &   Number of female adult parasitoids that emerge from a parasitized egg unit          \\
$\gamma_6$         &  Number of female adult parasitoids that emerge from a parasitized larvae unit \\
\botrule
\end{tabular}
\end{table}

In comparison to \cite{Molter, MAR2}, the formulation in \eqref{eq1} differs in several aspects: (i) it accounts for egg–larva interaction, which requires a modified recruitment term for the egg population;
(ii) more importantly, it models both host-parasitoid interactions, between the egg parasitoid {\it T. galloi} and the egg population, as well as between the larval parasitoid {\it C. flavipes} and the larval population, using Holling Type II functional responses, instead of the Holling Type I (mass-action) used in \cite{Molter, MAR2}. 
Moreover, the egg recruitment now depends on larval density, whereas in \cite{Molter, MAR2} it depends only on eggs, which decouples the first three equations from the remaining ones. This structural change leads to substantially different dynamics, as illustrated in the following section.

\subsection{Properties of the model}

Here we establish some fundamental invariant properties of system~\eqref{eq1}, ensuring that the solutions remain biologically meaningful for all admissible initial conditions.
Let us set
 \[
 \mathcal{C} := \left\{ {x(t)} \in \mathbb{R}_+^6: x_1(t) \leq K\right\}.
 \]
We obtain the following result, inspired by the analysis in~\cite{haddad2010} on the positivity of nonlinear systems.
\begin{theorem}[Invariance of $\mathcal{C}$] \label{the:metzlerpos} For any initial nonnegative condition ${x}_0\in \mathbb{R}_+^6$, there exists a unique solution $x$ to the initial value problem associated with~\eqref{eq1}. Moreover, the solution is defined in $[0,+\infty)$ and it remains non-negative for all $t\ge 0$. 
Furthermore, if the initial condition lies in  $\mathcal{C}$,  then the corresponding trajectory $x(t)$ remain in $ \mathcal{C},$ for all $t \in [0,+\infty).$
\end{theorem}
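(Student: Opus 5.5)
Write the right-hand side of \eqref{eq1} as $\dot x=f(x)$. The first step is local existence and uniqueness. The Holling terms $\frac{a_i u v}{1+a_ih_iu}$ are smooth on the open set $\{u>-1/(a_ih_i)\}$, which contains $\mathbb{R}_+$, so $f$ is locally Lipschitz on a neighbourhood of $\mathbb{R}_+^6$. Picard--Lindel\"of then gives a unique maximal solution on some interval $[0,T_{\max})$.

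The second step is nonnegativity, using the criterion of \cite{haddad2010}: $\mathbb{R}_+^6$ is forward invariant if $f$ is essentially nonnegative, i.e.\ $f_i(x)\ge 0$ whenever $x\in\mathbb{R}_+^6$ and $x_i=0$. We check this coordinate by coordinate.
\begin{itemize}
\item $x_1=0$ gives $f_1=r\sigma_1x_4\ge0$.
\item $x_2=0$ gives $f_2=\frac{a_1x_1x_3}{1+a_1h_1x_1}\ge0$.
\item $x_3=0$ gives $f_3=\gamma_3n_2x_2\ge0$.
\item $x_4=0$ gives $f_4=n_1x_1\ge0$.
\item $x_5=0$ gives $f_5=\frac{a_2x_4x_6}{1+a_2h_2x_4}\ge0$.
\item $x_6=0$ gives $f_6=\gamma_6n_4x_5\ge0$.
\end{itemize}
To avoid quoting the tangency result, one can argue directly. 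Perturb to $f+\varepsilon\mathbf 1$ and consider the first time a coordinate hits zero. There its derivative is $\ge\varepsilon>0$, a contradiction. Letting $\varepsilon\to0$ and using continuous dependence on the vector field gives invariance of $\mathbb{R}_+^6$.

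The third step is invariance of $\mathcal C$. On the face $x_1=K$ with $x\ge0$ we have $f_1=-(m_1+n_1)K-\frac{a_1Kx_3}{1+a_1h_1K}<0$, so the vector field points strictly inward. If a trajectory starts in $\mathcal C$ and $t^*$ is the first time with $x_1(t^*)=K$ and $x_1>K$ just afterwards, then $\dot x_1(t^*)<0$, a contradiction. Hence $x_1(t)\le K$ on $[0,T_{\max})$. For initial data in $\mathbb{R}_+^6\setminus\mathcal C$, $x_1$ is not needed for global existence, but the same estimate shows $\dot x_1\le r\sigma_1x_4-(m_1+n_1)x_1$.

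The main obstacle is global existence, $T_{\max}=\infty$, which requires ruling out finite-time blow-up. The plan is a linear comparison. On $\mathbb{R}_+^6$ the Holling terms satisfy $0\le\frac{a_iuv}{1+a_ih_iu}\le v/h_i$, and $1-x_1/K\le 1$ once $x_1\ge0$. Hence $\dot x\le Ax$ componentwise for a constant nonnegative-off-diagonal (Metzler) matrix $A$:
\begin{align*}
\dot x_1&\le r\sigma_1x_4, & \dot x_2&\le x_3/h_1, & \dot x_3&\le\gamma_3n_2x_2,\\
\dot x_4&\le n_1x_1, & \dot x_5&\le x_6/h_2, & \dot x_6&\le\gamma_6n_4x_5.
\end{align*}
Alternatively, the sum $S=\sum_i x_i$ satisfies $\dot S\le cS$ with $c=r\sigma_1+\gamma_3n_2+\gamma_6n_4+1/h_1+1/h_2$. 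Since $x\ge0$, this gives $|x(t)|_1=S(t)\le S(0)e^{ct}$. The solution therefore stays bounded on bounded intervals, and the blow-up alternative yields $T_{\max}=+\infty$. This completes all three claims.
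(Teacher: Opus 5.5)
Your proof is correct. For the two invariance claims it uses the same idea as the paper's proof: the sign of the vector field on each boundary face at a first hitting time. The difference is that yours covers what the paper's proof omits or only sketches.

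The paper's proof never addresses existence, uniqueness, or $T_{\max}=+\infty$. Your local Lipschitz observation on an open neighbourhood of $\mathbb{R}_+^6$, together with the linear growth bound on $\mathbb{R}_+^6$, supplies exactly this.

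For nonnegativity, the paper only records $\dot x_1(t_1)=r\sigma_1x_4(t_1)\ge 0$ at a hitting time. That does not by itself rule out crossing when this derivative vanishes. Your $\varepsilon\mathbf{1}$ perturbation makes the boundary inequality strict and then passes to the limit, which is the standard way to close this. State it via the first exit time from $\mathbb{R}_+^6$ rather than the first time a coordinate hits zero, so that initial data with zero entries, such as \eqref{eq:initial1}, are covered.

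Similarly, for $x_1\le K$ the paper asserts $\dot x_1(t^*)>0$ at the crossing, but only $\dot x_1(t^*)\ge 0$ is justified there. The strict inequality $f_1<0$ on $\{x_1=K\}$ that you use is what makes the contradiction valid.

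A minor point on the growth bound: in $\dot S$ the two Holling terms and the terms $\pm n_1x_1$ cancel exactly. Hence $\dot S\le(r\sigma_1+\gamma_3n_2+\gamma_6n_4)S$, and the bounds by $1/h_i$ are not needed. If you instead obtain $\dot S\le cS$ by summing your six componentwise inequalities, $c$ must also include $n_1$. Your stated $c$ is still valid, but only because of that cancellation.
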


\begin{proof}
    Assume that the trajectory $x_1$ hits zero at some time $t_1 > 0$, while the other trajectories remain non-negative. Evaluating the first equation of \eqref{eq1} at $t = t_1$, we get
\begin{equation}\label{eqproof1}
\frac{d x_1(t_1)}{dt} = r \sigma_1 x_4(t_1) \geq 0.
\end{equation}
 Thus, $\frac{d x_1(t_1)}{dt}$ is non-negative and, consequently, $x_1$ never assumes negative values.
An analogous argument applies to the other variables in system \eqref{eq1}, ensuring that all of them remain non-negative for all times.

Now, it remains to verify that $x_1(t) \leq K$, for all $t \geq 0$, provided that the initial condition lies in $\mathcal{C}.$
Suppose, by contradiction, that there exists a time $t_1 > 0$ such that $x_1(t_1) > K$. Then, necessarily, there must be a time $t^*$,  with $0 \leq t^*<t_1$, such that $x_1(t^*) = K$
and $\frac{d x_1(t^*)}{dt} > 0$. Evaluating the first equation of \eqref{eq1} at $t = t^*$, we get:
\begin{equation}\label{eqproof2}
    \frac{d x_1(t^*)}{dt} = - m_1 x_1(t^*) - n_1 x_1(t^*) - \frac{a_1 x_1(t^*) x_3(t^*)}{1 + a_1 h_1 x_1(t^*)} \leq 0.
\end{equation}
This contradiction implies that $x_1(t) \leq K$ for all $t \geq 0$. 

We conclude that $x(t) \in \mathcal{C}$,  for all $t \geq 0$, if the initial condition verifies $x(0) \in \mathcal{C}$.
\end{proof}

\subsection{Equilibria}

The expressions for the equilibrium points of system~\eqref{eq1} are presented in the following statement. The corresponding proof is provided in Appendix~\ref{AppEqui}, while the involved hypotheses are collected in Condition~\ref{assump:equilibria} below.

\begin{proposition}[Equilibria]\label{prop:equilibria}
System~\eqref{eq1} admits the following five equilibrium points:
\begin{enumerate}
    \item The trivial equilibrium $
    E^0 = (0,0,0,0,0,0).$

    \item The parasitoid-free equilibrium $
    E^{Pf} = (x_1^{Pf},0,0,x_4^{Pf},0,0),$ 
    which is biologically feasible provided that Condition~\ref{assump:equilibria}.\ref{itma} holds.

    \item The equilibrium corresponding to the absence of adult egg parasitoids \emph{T. galloi} and the persistence of adult larvae parasitoids \emph{C. flavipes}, given by $
    E^{Cf} = (x_1^{Cf},0,0,x_4^{Cf},x_5^{Cf},x_6^{Cf}),$ 
    which is biologically feasible under Conditions~\ref{assump:equilibria}.\ref{itmb}--\ref{itmc}.

    \item The equilibrium corresponding to the absence of adult larvae parasitoids \emph{C. flavipes} and the persistence of adult egg parasitoids \emph{T. galloi}, given by $
    E^{Tg} = (x_1^{Tg},x_2^{Tg},x_3^{Tg},x_4^{Tg},0,0),$ 
    which is biologically feasible under Conditions~\ref{assump:equilibria}.\ref{itmd}--\ref{itme}.

    \item The coexistence equilibrium $E^{\rm co} = (x_1^{co},x_2^{co},x_3^{co},x_4^{co},x_5^{co},x_6^{co}),$ 
    which is biologically feasible under Conditions~\ref{assump:equilibria}.\ref{itmb}--\ref{itmd}, \ref{itmf}, and \ref{itmg}.
\end{enumerate}
\end{proposition}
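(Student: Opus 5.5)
The plan is to set all six right-hand sides of \eqref{eq1} to zero and reduce the system to scalar equations by exploiting its chain structure. The third equation gives $x_3 = \gamma_3 n_2 x_2/m_3$ and the sixth gives $x_6 = \gamma_6 n_4 x_5/m_6$. The second and fifth equations give
\[
(m_2+n_2)x_2 = \frac{a_1 x_1 x_3}{1+a_1h_1x_1}, \qquad (m_5+n_4)x_5 = \frac{a_2 x_4 x_6}{1+a_2h_2x_4}.
\]
Substituting the expressions for $x_3$ and $x_6$, these become $x_2\,(\cdots)=0$ and $x_5\,(\cdots)=0$. So either $x_2=0$ (hence $x_3=0$), or $x_1$ is pinned at
\[
x_1^* = \frac{m_3(m_2+n_2)}{a_1\big(\gamma_3 n_2 - h_1 m_3(m_2+n_2)\big)}.
\]
Likewise, either $x_5=x_6=0$, or
\[
x_4^* = \frac{m_6(m_5+n_4)}{a_2\big(\gamma_6 n_4 - h_2 m_6(m_5+n_4)\big)}.
\]
This yields four cases, which I expect to match $E^{Pf}$ (or $E^0$), $E^{Tg}$, $E^{Cf}$ and $E^{co}$.

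I will treat the four cases in turn.

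\textbf{Case 1: $x_2=x_5=0$.} The first and fourth equations give $x_4 = n_1x_1/(m_4+n_3)$ and
\[
r\sigma_1 x_4(1-x_1/K) = (m_1+n_1)x_1.
\]
Substituting, either $x_1=0$, which gives $E^0$, or
\[
x_1^{Pf} = K\Big(1 - \frac{(m_1+n_1)(m_4+n_3)}{r\sigma_1 n_1}\Big).
\]
This is positive exactly when $r\sigma_1 n_1 > (m_1+n_1)(m_4+n_3)$, which I expect to be Condition~\ref{assump:equilibria}.\ref{itma}.

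\textbf{Case 2: $x_2\neq0$, $x_5=0$.} Here $x_1=x_1^*$, which needs $\gamma_3 n_2 > h_1m_3(m_2+n_2)$ for positivity. Then $x_4 = n_1x_1^*/(m_4+n_3)$. The first equation then determines
\[
x_3 = \frac{(1+a_1h_1x_1^*)\big[r\sigma_1 x_4(1-x_1^*/K) - (m_1+n_1)x_1^*\big]}{a_1x_1^*},
\]
so $x_2 = m_3 x_3/(\gamma_3 n_2)$. Positivity requires $x_1^*<K$ and a positive bracket; these should be Conditions~\ref{itmd}--\ref{itme}.

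\textbf{Case 3: $x_2=0$, $x_5\neq0$.} Here $x_4=x_4^*$, which needs the analogous inequality $\gamma_6 n_4 > h_2 m_6(m_5+n_4)$. The first equation with $x_3=0$ is linear in $x_1$ and gives
\[
x_1 = \frac{r\sigma_1x_4^* K}{r\sigma_1 x_4^* + K(m_1+n_1)}\in(0,K)
\]
automatically. The fourth equation then gives
\[
x_6 = \frac{(1+a_2h_2x_4^*)\big[n_1x_1 - (m_4+n_3)x_4^*\big]}{a_2x_4^*},
\]
which must be positive (Condition~\ref{itmc}), and then $x_5 = m_6x_6/(\gamma_6 n_4)$.

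\textbf{Case 4: both nonzero.} Here $x_1=x_1^*$ and $x_4=x_4^*$ are both fixed. The first equation solves for $x_3$ and the fourth for $x_6$, and the remaining components follow linearly. Positivity of each gives the listed conditions.

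The computations are elementary. The main obstacle is bookkeeping: matching each positivity requirement (nonzero denominators, $x_1<K$, positive brackets) to the exact items of Condition~\ref{assump:equilibria}, which is not visible yet. I would define the thresholds $x_1^*$ and $x_4^*$ once and phrase every condition in terms of them.

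I would also check that no further equilibria exist, since the dichotomies above are exhaustive. In Case 1 the equation is linear in $x_1$ after substitution, so there is at most one nontrivial root. In Case 4, $x_1$ and $x_4$ are already fixed. This would justify "five" as an upper bound, with feasibility depending on the conditions.
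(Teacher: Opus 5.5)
Your proposal is correct and follows the paper's proof essentially step for step. You solve the third and sixth equations for $x_3,x_6$, factor the second and fifth equations to get the four-way split on whether $x_2$ and $x_5$ vanish, recover the remaining components from the first and fourth equations, and read the feasibility conditions off the positivity of the resulting brackets; your Cases 2 and 3 are the paper's Cases 3 and 2. One thing to watch in the Case 4 bookkeeping: substituting $x_4^*$ into $x_1^* < K r\sigma_1 x_4^*/\big(r\sigma_1 x_4^* + K(m_1+n_1)\big)$ and clearing denominators gives the product $K(m_1+n_1)\,a_2[\gamma_6 n_4-(m_5+n_4)h_2m_6]$ in the denominator, whereas Condition~\ref{assump:equilibria}.\ref{itmf} as printed has a sum there, which appears to be a typo in the paper rather than a problem with your argument.
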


\begin{cond}\label{assump:equilibria} We assume the following relations among the system's parameters:
   \begin{enumerate}
\setlength{\itemsep}{1.2ex}
    \item \label{itma} $\quad (m_1 + n_1)(m_4 + n_3) < r\sigma_1 n_1$.
    \item \label{itmb} $\quad \gamma_6 n_4 - (m_5 + n_4) h_2 m_6 >0$.
    \item \label{itmc} $\displaystyle\quad \frac{(m_5 + n_4)m_6}{a_2[\gamma_6 n_4 - (m_5 + n_4)h_2 m_6]} < K\,\frac{r n_1\sigma_1 -(m_1 + n_1)(m_4 + n_3)}{r\sigma_1 (m_4 + n_3)}$.
    \item \label{itmd} $\quad \gamma_3 n_2 - (m_2 + n_2) m_3 h_1 >0$.
    \item \label{itme} $\quad \displaystyle\frac{(m_2 + n_2)m_3}{a_1[\gamma_3 n_2 - (m_2 + n_2) m_3 h_1]} < K\left[1 - \frac{(m_1 + n_1)(m_4 + n_3)}{r\sigma_1 n_1}\right]$.
    \item \label{itmf}{\footnotesize
    $\quad \displaystyle
    \frac{(m_2 + n_2) m_3}
    {a_1 \big[ \gamma_3 n_2 - (m_2 + n_2) m_3 h_1 \big]}
    <
    K\,\frac{r\sigma_1 (m_5 + n_4) m_6}
    {r\sigma_1 (m_5 + n_4) m_6 + K(m_1 + n_1) + a_2 \big[\gamma_6 n_4 - (m_5 + n_4) h_2 m_6\big]}$.}
    \item \label{itmg} $\quad \displaystyle
    \frac{m_4 + n_3}{n_1}\,
    \frac{(m_5 + n_4) m_6}
    {a_2 \big[\gamma_6 n_4 - (m_5 + n_4) h_2 m_6\big]}
    <
    \frac{(m_2 + n_2) m_3}
    {a_1 \big[ \gamma_3 n_2 - (m_2 + n_2) m_3 h_1 \big]}$.
\end{enumerate}
\end{cond}

\subsection{Local stability analysis}

In this section, we investigate the local stability properties of the equilibrium point $E^0$, which is the only equilibrium point for which pest populations vanish. Recall that an equilibrium point $E^*$ is locally asymptotically stable (LAS) if and only if all eigenvalues of the Jacobian matrix evaluated at $E^*$ have negative real parts. This characterization allows us to derive necessary and sufficient conditions for the stability of $E^0$ in terms of the system parameters.

We obtain the following result, whose proof is presented in Appendix~\ref{appendixB}.

\begin{proposition}[Stability of the origin] \label{prop:staborigin}
The trivial equilibrium $E^0 = (0,0,0,0,0,0)$ of~\eqref{eq1} is stable if and only if
    \begin{equation} \label{eq:eigenvorigin3}
        (m_1+n_1)(m_4+n_3) >
        r n_1\sigma_1.
    \end{equation}
\end{proposition}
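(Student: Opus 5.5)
The plan is to linearize~\eqref{eq1} at $E^0$ and use the characterization of local asymptotic stability recalled at the start of this subsection. At the origin every Holling term $\frac{a_i x x'}{1+a_ih_ix}$ is a product of two state variables, so it has zero derivative there. The logistic factor contributes only $r\sigma_1 x_4$ to first order. The Jacobian $J(E^0)$ therefore has the following nonzero rows:
\[
\dot x_1 = -(m_1+n_1)x_1 + r\sigma_1 x_4,\quad
\dot x_2 = -(m_2+n_2)x_2,\quad
\dot x_3 = \gamma_3 n_2 x_2 - m_3 x_3,
\]
\[
\dot x_4 = n_1 x_1 - (m_4+n_3)x_4,\quad
\dot x_5 = -(m_5+n_4)x_5,\quad
\dot x_6 = \gamma_6 n_4 x_5 - m_6 x_6 .
\]

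The key structural step is to reorder the variables as $(x_1,x_4,x_2,x_3,x_5,x_6)$. In this ordering $J(E^0)$ becomes block lower triangular (indeed block diagonal up to lower coupling terms). The diagonal blocks are
\[
A=\begin{pmatrix} -(m_1+n_1) & r\sigma_1\\ n_1 & -(m_4+n_3)\end{pmatrix},\quad
\begin{pmatrix} -(m_2+n_2) & 0\\ \gamma_3 n_2 & -m_3\end{pmatrix},\quad
\begin{pmatrix} -(m_5+n_4) & 0\\ \gamma_6 n_4 & -m_6\end{pmatrix}.
\]
The last two blocks are triangular, with eigenvalues $-(m_2+n_2),-m_3,-(m_5+n_4),-m_6$. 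By the standing positivity assumption these are all strictly negative. Hence the spectrum of $J(E^0)$ lies in the open left half-plane if and only if both eigenvalues of $A$ do.

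For the $2\times2$ block I would apply the Routh--Hurwitz criterion: both eigenvalues have negative real part iff $\operatorname{tr}A<0$ and $\det A>0$. The trace is $-(m_1+n_1)-(m_4+n_3)<0$ automatically. The determinant is $(m_1+n_1)(m_4+n_3)-r\sigma_1 n_1$, so $\det A>0$ is exactly~\eqref{eq:eigenvorigin3}.

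For the converse direction, if $\det A<0$ then $A$ has a positive real eigenvalue, so $E^0$ is unstable. The borderline case $\det A=0$ gives a zero eigenvalue, so $E^0$ is not LAS in the sense used here; I would note that "stable" in the statement is read as LAS, consistent with the definition recalled above. The only mild obstacle is this interpretation of the non-hyperbolic boundary case. It also fits that it is the complement of Condition~\ref{assump:equilibria}.\ref{itma}, which is exactly when $E^{Pf}$ becomes feasible (a transcritical-type exchange). The remaining steps are routine.
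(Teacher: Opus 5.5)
Your proposal is correct and is essentially the paper's argument: linearize at $E^0$, note that $-m_6$, $-(m_5+n_4)$, $-m_3$, $-(m_2+n_2)$ are automatically negative, and reduce to the $(x_1,x_4)$ block. The paper writes that block's eigenvalues explicitly as $\frac{-b\pm\sqrt{b^2-4c}}{2}$, with $b=m_1+n_1+m_4+n_3$ and $c=(m_1+n_1)(m_4+n_3)-rn_1\sigma_1=\det A$, while you use the equivalent trace--determinant test; your explicit handling of the case $\det A=0$, and the remark that the reordered Jacobian at the origin is in fact exactly block diagonal, are small clarifications the paper leaves implicit.
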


\begin{remark}
It is worth noting that, for the parameter values given in Table~\ref{tabmodel}, the origin is an unstable equilibrium of system~\eqref{eq1}. This aligns with empirical observations: the host-parasitoid populations 
coexist in nature, so one expects the trivial equilibrium to be unstable and possibly the coexistence equilibrium to be stable. As a matter of fact, Condition~\ref{assump:equilibria}.\ref{itma} is complementary to~\eqref{eq:eigenvorigin3}, and thus guarantees the instability of $E^0$.
\end{remark}

\begin{table}[ht]
\caption{Value of parameters of system (\ref{eq1})}\label{tabmodel}
\begin{tabular}{c c c c c}
\toprule
Parameter &
Value &  Reference \\
\midrule
$r$ &
0.19 & \cite{PARRA} \\
$K$ &
25000 & \cite{JOAO,Molter, MAR,Wiedenmann} \\
$m_1$ &
$10^{-3}$ & Assumed $m_1 <<1$ \\
$m_2$ &
0.03566 & \cite{PARRA} \\
$m_3$ &
1/4 & \cite{PARRA} \\
$m_4$ &
0.00257 & \cite{PARRA} \\
$m_5$ &
$2.73m_4$ & \cite{Parra2021} \\
$m_6$ &
1/5 & \cite{PARRA} \\
$n_1$ &
1/8 & \cite{JOAO,Molter, MAR} \\
$n_2$ &
1/9 & \cite{JOAO,Molter, MAR} \\
$n_3$ &
1/50 & \cite{JOAO,Molter, MAR} \\
$n_4$ &
1/16 & \cite{JOAO,Molter, MAR} \\
$\gamma_3$&
1 & \cite{PARRA} \\
$\gamma_6$ &
3 & \cite{JOAO, PARRA}\\
$\sigma_1$ &
0.5 & \cite{PARRA, Parra2021} \\
$a_1$ &
0.02981 & Computed from \cite{Lopes} \\
$a_2$ &
0.041 &  Computed from \cite{Wiedenmann} \\
$h_1$ &
7.02651 & Computed from \cite{Lopes}\\
$h_2$ &
0.012 & Computed from \cite{Wiedenmann} \\
\botrule
\end{tabular}
\end{table}

\subsection{Numerical simulations of the dynamical system}

In this section, we present some numerical simulations of system \eqref{eq1} to illustrate the system’s sensitivity to selected parameters' variations. In doing so, we highlight the occurrence of undesirable state trajectories, since the pest populations exceed an {\em economic damage threshold}, which motivates the need for biological control strategies, analyzed in the subsequent sections.
 The codes used in this article are available at a public repository\footnote{Access public repository: \url{https://github.com/fredsonaguiar/parasitoid-sugarcane-control}.}. 

We first consider a parasitoid-free scenario and illustrate in Figure \ref{fig:1} the state trajectories of the populations $x_1$ and $x_4$ starting from the initial conditions
\begin{equation} \label{eq:initial1}
    \begin{bmatrix}
    x_1(0), x_2(0), x_3(0), x_4(0), x_5(0), x_6(0)
\end{bmatrix} = \begin{bmatrix}
    800, 0, 0, 1000, 0, 0
\end{bmatrix}     
\end{equation}
and the parameters given in Table \ref{tabmodel}.
The emergence of the sugarcane borer in the field may lead to significant economic losses. For this reason, it is necessary to define an {\em economic damage threshold}, which represents the upper density of borer's larvae per hectare beyond which the crop is considered lost. In the present study, we fix the value of the economic damage threshold at $2500$ larvae/ha \cite{Lopes, Molter, Nava,MAR}. Since population parameters are not available directly under field conditions, this economic damage threshold was adopted from the literature for pest control with model parameters based on laboratory data.

\begin{figure}[ht]
    \centering
    \includegraphics[width=1\linewidth]{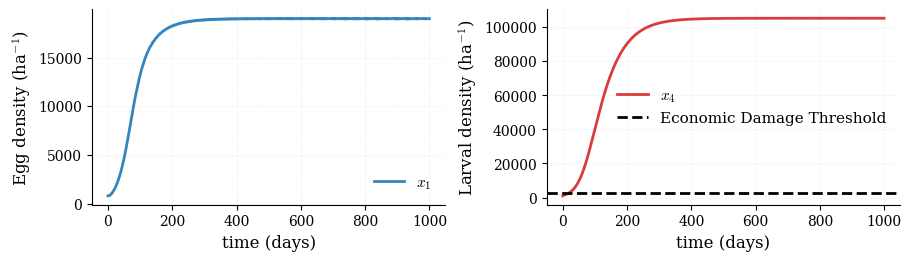}
    \caption{Simulation of the parasitoid-free model with initial condition~\eqref{eq:initial1}
and parameter values from Table~\ref{tabmodel}.
On the left: un-parasitized egg density $x_1$ (eggs~ha$^{-1}$).
On the right: un-parasitized larval density $x_4$ (larvae~ha$^{-1}$);
the dotted horizontal line marks the economic damage threshold of $2500$ larvae~ha$^{-1}$.
Both trajectories converge to the corresponding coordinate of the parasitoid-free equilibrium $E^{P_f}$,
with asymptotic values $x_1^{P_f}\approx 19013$ eggs~ha$^{-1}$
and $x_4^{P_f}\approx 105300$ larvae~ha$^{-1}$.}
\label{fig:1}
\end{figure}

In Figure \ref{fig:1}, we observe that the trajectories approach the parasitoid-free equilibrium $E^{P_f}$ which, in view of Proposition~\ref{prop:equilibria} and the values in Table~\ref{tabmodel}, is given by  $E^{P_f} \approx (19013,0,0,105300,0,0).$ We also see that, at this equilibrium, $x_4$ largely overpasses the economic damage threshold.

On the other hand, in the case of the presence of parasitoids in the field, we consider the initial conditions with non-zero values for the parasitoid populations
\begin{equation} \label{eq:initial2}
    \begin{bmatrix}
        x_1(0), x_2(0), x_3(0), x_4(0), x_5(0),
        x_6(0) 
    \end{bmatrix} = \begin{bmatrix}
        800, 0, 100, 1000, 0, 100 
    \end{bmatrix}.
\end{equation}

Figure~\ref{fig:2} shows the temporal trajectories of system~\ref{eq1} for the initial condition~\eqref{eq:initial2} and the parameter values listed in Table~\ref{tabmodel}, representative of a laboratory environment. The results suggest that, in this setting, both egg and larval populations can be suppressed by a relatively small parasitoid population. In contrast, such suppression is rarely achieved under field conditions, where environmental variability and other adverse factors diminish parasitism rates having a direct impact on the values of the Holling pairs $(a_1,h_1)$ and $(a_2,h_2)$. As a result, hosts and parasitoids generally persist in coexistence. Under these circumstances, high pest densities may still lead to outbreaks, highlighting the need for effective pest management interventions.

\begin{figure}[ht]
    \centering    \includegraphics[width=1\linewidth]{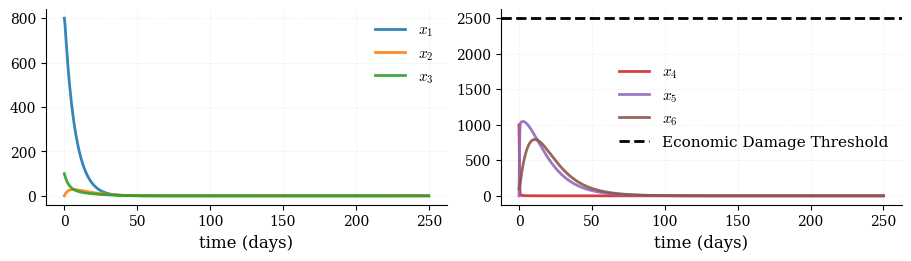}
    \vspace{-0.3cm}
    \caption{Simulation of the full six-dimensional model with initial condition~\eqref{eq:initial2}
and parameter values from Table~\ref{tabmodel}.
On the left: host egg compartments; un-parasitized eggs $x_1$ (blue),
parasitized eggs $x_2$ (orange), and egg-stage parasitoids $x_3$ (green),
in units of eggs~ha$^{-1}$.
On the right: host larval compartments; un-parasitized larvae $x_4$ (red),
parasitized larvae $x_5$ (purple), and larval-stage parasitoids $x_6$ (brown),
in units of larvae~ha$^{-1}$; the dashed horizontal line marks the economic
damage threshold of $2500$ larvae~ha$^{-1}$.
   }
    \label{fig:2}
\end{figure}

It is worth noting that substantial variability in the parameter values may arise even under laboratory conditions. For instance, the Holling parameters estimated in~\cite{Santos,Santos1} were $a_1=2.71, h_1=0.06$ and $a_2=0.28, h_2=1.46$, whereas experiments reported in~\cite{Lopes,Wiedenmann} yielded markedly different estimates: $a_1=0.02981, h_1=7.02651$ and $a_2=0.041, h_2=0.012$. Such discrepancies suggest that the dynamics of the sugarcane pest--parasitoid system are highly sensitive to biological and environmental factors, including temperature, humidity, interspecific interactions, and host-searching efficiency. Moreover, the observed variability may indicate potential parameter-identifiability issues, as well as strong correlations between the attack rate and handling time parameters within each Holling functional response.

To assess the effect of the potentially large variation in the Holling parameters, we perform a bifurcation analysis with respect to $h_2$ in~\eqref{eq1}, considering $h_2 \in [0.001,15]$ for two significantly different values of $a_2$, namely $a_2 = 4.1 \times 10^{-2}$ and $a_2 = 4.1 \times 10^{-5}$, while fixing all remaining parameters as in Table~\ref{tabmodel}. Figure~\ref{fig:4} illustrates the bifurcation behavior of the populations $x_1$, $x_4$, $x_5$, and $x_6$ with respect to $h_2$. Limit cycles arise for $h_2 \in [0.001,12]$ in both cases. The populations $x_2$ and $x_3$ converge to zero, and therefore their bifurcation diagrams are not shown. An analogous analysis with respect to $h_1$ revealed no bifurcation behavior.

\begin{figure}[ht]
    \centering
    \includegraphics[width=1\linewidth]{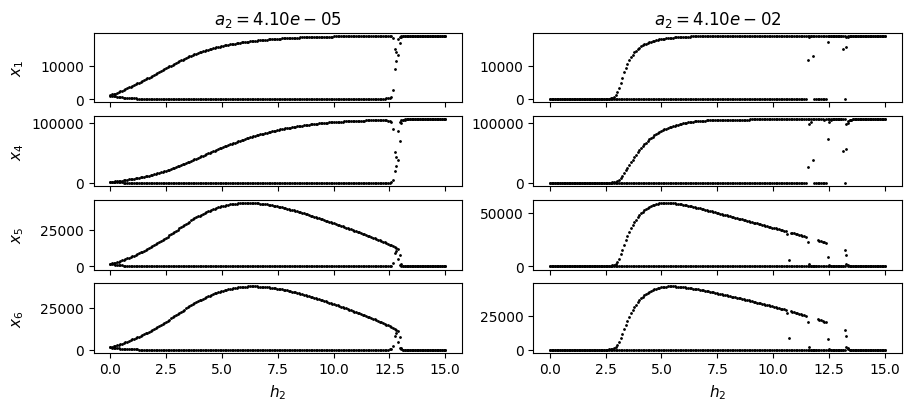}
    \caption{Bifurcation analysis for $h_2,$ with two different values of $a_2,$ namely $a_2 = 4.1 \times \times 10^{-5}$ on the left and $a_2 = 4.1 \times \times 10^{-2}$ on the right. Limit cases observed for $x_2$ and $x_3$ are disregarded, as they always approach the trivial state. Numerical experiments considered a $7000$ days horizon, with a transient time of $5000$ days. Initial condition given by \eqref{eq:initial2}. Further parameters presented in Table \ref{tabmodel}. }
    \label{fig:bif_analisys_complete}
\end{figure}

Figure~\ref{fig:4} shows the temporal trajectories of system~\eqref{eq1}
for $a_2 = 4.1 \times 10^{-5}$ and $h_2 = 6$, with initial
condition~\eqref{eq:initial2} and remaining parameters as in
Table~\ref{tabmodel}.
Under these values the system exhibits persistent oscillations, and the
resulting limit cycle is consistent with the bifurcation diagram in
Figure~\ref{fig:bif_analisys_complete}: the amplitude and period of the
oscillations match the branch identified in Figure~\ref{fig:bif_analisys_complete},
confirming that the bifurcation analysis correctly predicts the
long-term dynamical regime.

\begin{figure}[!ht]
    \centering    \includegraphics[width=1\linewidth]{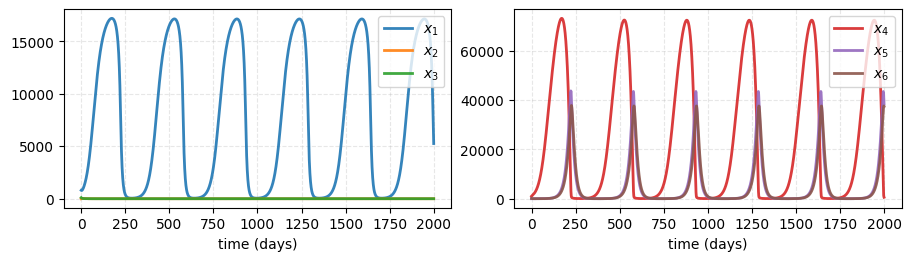}
    \caption{
    Temporal trajectories of system \eqref{eq1}. The parameter values are $a_2 = 4.1  \times 10^{-5}$ and $h_2=6.$ The initial condition is \eqref{eq:initial2}. Further parameters presented in Table \ref{tabmodel}. We observe limit cycles, and confirm that the values at the limit coincide with the ones shown in Figure \ref{fig:bif_analisys_complete}.
    }
    \label{fig:4}
\end{figure}

As illustrated in the sensitivity analysis in this section with respect to the parameters $a_2$ and $h_2$, the economic damage threshold is exceeded for a broad range of parameter values.
Indeed, Figures \ref{fig:1}, \ref{fig:bif_analisys_complete}, and \ref{fig:4} show that this threshold is exceeded under different environmental conditions, indicating the need to implement appropriate control strategies, which will be addressed in the following section.

\section{Control strategies for pest management}
\label{SecControl}

This section is devoted to the design and analysis of control strategies to reduce or eliminate the pest population. The numerical results of the previous section show that, without control, the pest populations (both eggs and larvae) rapidly reach levels well above the economic damage threshold, whereas sufficiently large releases of parasitoids can suppress the infestation but require unrealistic release sizes. Moreover, under weak parasitism efficiency (small $a_1$ or $a_2$, and large $h_1$ or $h_2$), the system exhibits persistent oscillations with pest levels remaining above the threshold. These findings motivate the search for effective and practically implementable control strategies.

\subsection{Optimal Control strategy}
\label{SecOC}

Assuming a continuous release rate of parasitoids in the environment, we consider the control system
\begin{equation}\label{controlsys}
\begin{split}
    \frac{d x_1}{dt} & = r \sigma_1 x_4 \left(1 - \frac{x_1}{K}\right)- m_1 x_1 - n_1 x_1 - \frac{a_1 x_1x_3}{1 + a_1h_1x_1},\\
    \frac{d x_2}{dt} &= \frac{a_1 x_1x_3}{1 + a_1h_1x_1} - m_2 x_2 - n_2 x_2,\\
    \frac{d x_3}{dt} &= \gamma_3  n_2 x_2 - m_3 x_3 + u,\\
    \frac{d x_4}{dt} &= n_1 x_1  - m_4 x_4 - n_3 x_4 - \frac{a_2x_4x_6}{1 + a_2h_2x_4} , \\
    \frac{d x_5}{dt} &= \frac{a_2 x_4x_6}{1 + a_2h_2x_4}  - m_5 x_5 - n_4 x_5,\\
    \frac{d x_6}{dt} &= \gamma_6 n_4 x_5 - m_6 x_6 + v, 
\end{split}
\end{equation}
where $u$ and $v$ denote the release rate per unit time of the egg parasitoid {\em T. galloi} and of the larval parasitoid {\em C. flavipes}, respectively.
For practical implementation, the controls are assumed to satisfy the bounds
\begin{equation}\label{eq:controlbounds}
0 \leq u(t) \leq u_{\max},\qquad 0 \leq v(t) \leq v_{\max},
\end{equation}
where $u_{\max}$ and $v_{\max}$ are given positive constants that represent the maximum feasible instantaneous release rates of parasitoids at any given time $t\ge 0$.
We consider the associated optimal control problem, which aims to minimize the cost function
\begin{equation}\label{eq:costfunct}
J(u,v) := \int_0^T \Big[
\beta_1 x_1(t) + \beta_4 x_4(t) + \beta_5 x_5(t)
+\alpha_u u(t) + \alpha_v v(t)
\Big] \, dt,
\end{equation}
over the horizon $[0,T]$ for some $T>0$, where $\beta_1,\beta_4,\beta_5,\alpha_u,$ and $\alpha_v$ are nonnegative weights associated with the pest populations and the parasitoid release efforts, respectively. The variable $x_5$ is included in the cost function because parasitized sugarcane borer larvae remain active and may still cause damage to the plantation \cite{Botelho,Santos}. 
We further assume that, above the already-mentioned economic damage threshold, the plantation becomes completely compromised. This leads to the state constraint
\begin{equation}
\label{stateconstraint}
x_4(t)+x_5(t) \leq L,
\qquad {\rm for \, all\,\, } t\in [0,T],
\end{equation}
where $L$ represents the maximum admissible larvae density---related to the economic damage threshold introduced before.
In addition, we impose the terminal constraint
\begin{equation}
\label{eq:finalconstraint}
x_1(T) + x_4(T) + x_5(T) \leq \varepsilon_T,
\end{equation}
which forces the pest population to end below a prescribed acceptable level. 
In summary, the optimal control problem under consideration is to minimize the cost function in~\eqref{eq:costfunct} subject to~\eqref{controlsys}-\eqref{eq:controlbounds}-\eqref{stateconstraint}-\eqref{eq:finalconstraint}.

The simulations for the optimal control problem \eqref{controlsys}-\eqref{eq:finalconstraint}, displayed in Figure~\ref{fig:ocp1}, illustrate the optimal control profiles and the corresponding state trajectories over the time horizon $T=70$ days. The parameter values are as reported in Table~\ref{tabmodel}, except for $a_2=4.1 \times 10^{-5}$ and $h_2=6$. We consider $L=2000$ larvae/ha in order to stay far from the economic damage threshold of $2500$ larvae/ha, and upper bounds on the controls given by $u_{\max}=v_{\max}=1000$. The weights in the cost function are set to $\alpha_u=\alpha_v=1$ and $\beta_1=\beta_4=\beta_5=10$, balancing the reduction of pest populations with the implementation costs. The terminal constraint \eqref{eq:finalconstraint} with $\varepsilon_T=25$ is satisfied by the optimal solution, which represents about $1\%$ of the economic damage threshold.
Starting from the initial condition in~\eqref{eq:initial1}, the optimal strategy leads to a total release of approximately $\int_0^{70} u(t)dt \approx 4584$ units of \emph{T.~galloi} and $\int_0^{70} v(t)dt \approx 17061$ units of \emph{C.~flavipes}. 
The optimal control simulations were carried out using Python - GEKKO package for Optimal Control \cite{beal2018gekko}.

In the following subsections, we present alternative control techniques, primarily formulated in closed-loop form. We conclude with a comparative analysis of the performance of the different strategies.

\begin{figure}
    \centering
    \includegraphics[width=1\linewidth]{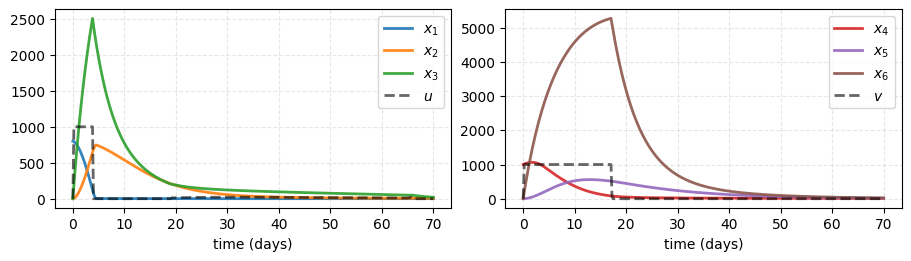}
    \caption{Optimal controls and state trajectories for  problem \eqref{controlsys}-\eqref{eq:finalconstraint}. The initial condition is given in~\eqref{eq:initial1}. 
Parameter values are as in Table~\ref{tabmodel}, except for $a_2=4.1 \times 10^{-5}$ and $h_2=6$. The time horizon is $T=70$ days, the upper bound in~\eqref{stateconstraint} is $L=2000$, and the upper bounds in~\eqref{eq:controlbounds} are $u_{\max}=1000$ and $v_{\max}=1000$. The weights in the integral cost function in~\eqref{eq:costfunct} are $\alpha_u=\alpha_v=1$ and $\beta_1=\beta_4=\beta_5=10$. The terminal constraint in~\eqref{eq:finalconstraint} is $\varepsilon_T=25$. The total number of released parasitoids is $\int_0^{70} u(t)\,dt \approx 4584$ units of \emph{T.~galloi} (dashed line in left figure) and $\int_0^{70} v(t)\,dt \approx 17061$ units of \emph{C.~flavipes} (dashed line in right figure).}
    \label{fig:ocp1}
\end{figure}

\subsection{State-Dependent Riccati Equation (SDRE) approach}\label{sec:SDRE}

In this subsection, we employ a control technique based on suboptimal feedback control, known as the {\em State-Dependent Riccati Equation (SDRE) method} \cite{MRACEKKK,MRACEK}. We begin by presenting the main elements of this technique.

\subsubsection{The State-Dependent Riccati Equation (SDRE) method}

In this section we briefly recall the SDRE method for a general control-affine system of the form
\begin{equation}
\label{eq2pont1}
\begin{array}{ll}
\dot{x}=f(x)+B(x)u, \quad f(0)=0, \\
y=C(x)x,
\end{array}
\end{equation}
where $x \in \mathbb{R}^{n}$ denotes the state vector, $u \in \mathbb{R}^{m}$ the control input, and $y \in \mathbb{R}^{s}$ the system output. The mappings $f:\mathbb{R}^{n}\to\mathbb{R}^{n},$  $B:\mathbb{R}^{n}\to\mathbb{R}^{n\times m}$ and
$C:\mathbb{R}^{n}\to\mathbb{R}^{s\times n}$
are assumed to be continuously differentiable with respect to their argument. Moreover, we consider the cost function
\begin{equation}
\label{eq2pont2}
J(u)= \int^{\infty}_{0}\left[ x^{\top}Q(x)x+ u^{\top}R(x)u \right]dt,
\end{equation}
where the matrix 
$Q(x)=C(x)^\top C(x)\in\mathbb{R}^{n\times n}$ is symmetric positive semidefinite and the matrix $R(x)\in\mathbb{R}^{m\times m}$ is symmetric positive definite. We assume that the function $ R : \mathbb{R}^{n}\to\mathbb{R}^{m \times m} $ is sufficiently smooth. 
The nonlinear system (\ref{eq2pont1}) can be rewritten in a form analogous to a linear system through a state-dependent factorization \cite{BANKS}:
\begin{equation}\label{eq2pont3}
\begin{array}{ll}
\dot{x}=A(x)x+B(x)u, \quad
y=C(x)x, \
\end{array}
\end{equation}
where $f(x)=A(x)x$.
Note that the choice of the matrix $A(x)$ is not unique. Different parameterizations may be adopted, provided that the detectability and stabilizability properties of the system, in the sense described below, are preserved.

To apply the SDRE method to the controlled system (\ref{eq2pont3}), the system must verify some algebraic hypotheses~\cite{MRACEKKK, MRACEK}. Under these assumptions, suboptimal solutions are obtained at each integration step of system~\eqref{eq2pont3}. More precisely, according to \cite[Theorem 1]{MRACEK}, the closed-loop solution obtained by the SDRE method (see \eqref{eq2pont6} below) is locally asymptotically stable provided that the pairs $\{A(x),B(x)\}$ and $\{C(x),A(x)\}$ are {\em pointwise stabilizable and detectable,} respectively, in the linear sense, for all $x\in \mathbb{R}^n$.

By applying Pontryagin's Maximum Principle to the optimal control problem \eqref{eq2pont2}-\eqref{eq2pont3} (see, {\em e.g.},~\cite{Sethi2000}), we obtain a closed-form for the optimal control:
\begin{equation}\label{eq2pont6}
u(x)=-R(x)^{-1}B(x)^{\top}P(x)x,
\end{equation}
where the matrix $P(x)$ is symmetric and positive definite for all values of $x$, and is the solution of the {\em Riccati equation} with state-dependent matrices
\begin{equation}\label{eq2pont5}
\displaystyle P(x)A(x)+A^{\top}(x)P(x)-P(x)B(x)R^{-1}(x)B^{\top}(x)P(x)+Q(x)=0.
\end{equation}

Such a matrix equation does not generally admit an analytic solution due to the state dependence of the involved matrices. For this reason, we rely on suitable numerical methods to determine a suboptimal solution to the control problem (\ref{eq2pont2}) - (\ref{eq2pont5}), according to the discrete-time Algorithm \ref{alg:dtsdre} described below (see also~\cite{Molter,Molter2010}).

\begin{algorithm}
\caption{Discrete-Time SDRE}
\label{alg:dtsdre}
\begin{algorithmic}[]

\State 
\noindent\textbf{Step 1:} Define the discrete-time nonlinear system in the state-dependent coefficient form
\[
x_{k+1} = A(x_k)x_k + B(x_k)u_k ,
\]
with sampling instant \(k \in \{0,\ldots,N-1\}\), $x_k = x(t_k)$, $0 = t_0 < \ldots < t_N = T$.

\State 
\noindent\textbf{Step 2:} For each sampling instant \(k \in \{0,\ldots,N-1\}\), measure the system state \(x_k\) and compute the matrices
\(
A(x_k), B(x_k),
\)
as well as the weighting matrices
\(
Q(x_k), R(x_k).
\)

\State 
\noindent\textbf{Step 3:} Solve the  state-dependent Riccati equation (\ref{eq2pont5})
and obtain \(P(x_k)\).

\State 
\noindent\textbf{Step 4:} Compute the control law $u(t)\equiv u_k $ for any $ t\in [t_k,t_{k+1}) $, where $u_k = u(t_k)$ is derived from equation (\ref{eq2pont6}).

\State 
\noindent\textbf{Step 5:} Propagate the system dynamics to obtain the next state
\[
x_{k+1}=A(x_k)x_k+B(x_k)u_k.
\]
Then, increment \(k \leftarrow k+1\) and return to \textbf{Step 2}.
\end{algorithmic}
\end{algorithm}

\subsubsection{SDRE method for pest control}
\paragraph{Case I - SDRE continuous release}\label{SDRE-continuous}
We now apply Algorithm \ref{alg:dtsdre} to the controlled system \eqref{controlsys}. We first recast
system~\eqref{controlsys} in the form~\eqref{eq2pont3} by introducing the following state-dependent matrices $A$, $B$ and $C$, as well as weight matrices $Q$ and $R$:
\begingroup
\small
\setlength{\arraycolsep}{1pt}
\begin{equation}\label{eq:matrixAx}
A(x)= \left[\begin{array}{cccccc}
-(m_1+n_1) & 0 & -\frac{a_1x_1}{1+a_1h_1x_1} & r\sigma_1\left(1-\frac{x_1}{K}\right) &0 & 0\\
\frac{a_1x_3}{1+a_1h_1x_1} & -(m_{2}+n_{2}) &  0 & 0 & 0 & 0\\
0 & \gamma_3n_2 & -m_3 & 0 & 0 & 0 \\
n_1 & 0 & 0 & -(m_4+n_3)& 0 & -\frac{a_2x_4}{1+a_2h_2x_4} \\
0 & 0 & 0 & \frac{a_2x_6}{1+a_2h_2x_4} & -(m_5+n_4) & 0 \\
0 & 0 & 0 & 0 & \gamma_6n_4 & -m_6
\end{array}\right],
\end{equation}
\endgroup

\begin{equation}\label{eq:matricesSDRE} 
B =
\begin{bmatrix}
0 & 0 \\
0 & 0 \\
1 & 0 \\
0 & 0 \\
0 & 0 \\
0 & 1
\end{bmatrix},\;\;
C=
{\left[\begin{array}{cccccc}
1&0&0&0&0&0\\
0&0&0&1&0&0\\
0&0&0&0&1&0
\end{array}
\right]},\;\;
R=
{\left[\begin{array}{cc}
25&0 \\
0&1
\end{array}
\right]},\;\;Q=C^\top C.
\end{equation}
This choice of matrix $C$ is motivated by the fact that only populations $x_1$, $x_4$, and $x_5$ are measurable \cite{Molter}, and these are precisely the variables we seek to reduce. More specifically, the state-dependent term in the cost function~\eqref{eq2pont2} penalizes the quantity $x_1^2 + x_4^2 + x_5^2$, so the optimization procedure aims to drive these three variables towards zero.

Once the matrices $A$, $B$, $C$, $Q$, and $R$ in~\eqref{eq:matrixAx}--\eqref{eq:matricesSDRE} are specified, we numerically verified, using symbolic computation, that the pairs $\{A(x),B(x)\}$ and $\{C(x),A(x)\}$ are pointwise stabilizable and detectable, respectively, as required. The verification code is included in the above-cited repository.
Afterwards, we determine the controls $u$ and $v$ in~\eqref{controlsys} following the steps described in Algorithm~\ref{alg:dtsdre}. 

Figure~\ref{figure1} illustrates the controlled trajectories and the corresponding control profiles. 
We use parameter values from Table~\ref{tabmodel} except $h_2 = 6$, $a_2 = 4.1 \times 10^{-5}$, and the initial conditions~\eqref{eq:initial1}. Recall that, under these values, the release of parasitoids into the system leads to the emergence of limit cycles, as shown in Figure~\ref{fig:4}. However, the application of the control is expected to drive the system toward stability, eliminating sustained oscillations over time, with the pest population tending to zero. Indeed, as illustrated in Figure~\ref{figure1}, the controlled state trajectories of system~\eqref{controlsys} with control functions \(u\) and \(v\) computed via Algorithm~\ref{alg:dtsdre} are controlled to zero.
Over the time horizon \(T=70\), the SDRE-based control strategy yields a terminal value \(x_1(T)+x_4(T)+x_5(T)\approx 53\), indicating a significant, although not complete, reduction of the targeted populations. The cumulative control effort corresponds to approximately \(\int_0^{70} u(t)\,dt \approx 2092\) units of \emph{T.~galloi} (green curve) and \(\int_0^{70} v(t)\,dt \approx 17049\) units of \emph{C.~flavipes} (blue curve).
The black dotted curve represents the function $x_4+x_5$ as a function of time. One can observe that this SDRE feedback control keeps the pest population far below the damage threshold of $2500$ larvae/ha at all times.

\begin{figure}[!ht]
\begin{center}
	\includegraphics[width=1\linewidth]{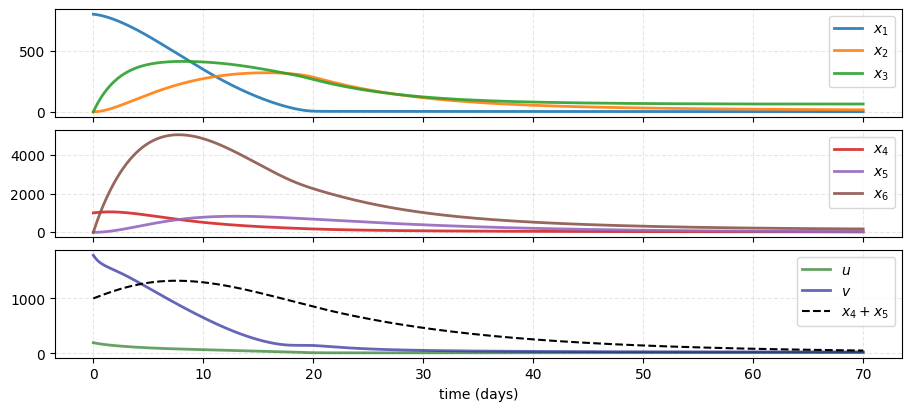 }
\caption{Temporal trajectories of populations $x_1$ to $x_6$ and control functions $u$, $v$ for system~\eqref{eq2pont3} with matrices from~\eqref{eq:matrixAx}-\eqref{eq:matricesSDRE} and control inputs computed by Algorithm~\ref{alg:dtsdre}, initial conditions \eqref{eq:initial1}, $a_2 = 4.1 \times 10^{-5}$,  $h_2 = 6$, and the other parameters from Table \ref{tabmodel}.
Time horizon is $T=70$.
The terminal value for selected state variables add to $x_1(T)+x_4(T)+x_5(T)\approx 53$. The total number of released parasitoids is $\int_0^{70} u(t)\,dt \approx 2092$ units of \emph{T.~galloi} (green curve in bottom figure) and $\int_0^{70} v(t)\,dt \approx 17049$ units of \emph{C.~flavipes} (blue curve in bottom figure).} 
\label{figure1}
\end{center}
\end{figure}

\paragraph{Case II - Event-triggered SDRE control}
In this section, we combine the SDRE method with an event-triggered approach. We consider 
an alert level of $2000$ larvae/ha, to ensure keeping the larvae population below the damage threshold of $2500$ larvae/ha~\cite{MOLNAR,PARRA}. Since only larvae cause damage to the crop among the six populations in Table~\ref{tab:populations}, we decide to activate the control by releasing parasitoids into the field {\em only} when the number of larvae exceeds the alert level; that is, whenever
\begin{equation}
\label{threshold}
    x_4 + x_5 > 2000.
\end{equation}
The simulations for this scenario are presented in Figure~\ref{Fig:SDRE_threshold}, which displays the temporal trajectories of the populations $x_1$ to $x_6$ together with the control inputs $u$ and $v$ for system~\eqref{eq2pont3}. The involved matrices are given in~\eqref{eq:matrixAx}--\eqref{eq:matricesSDRE}, and the controls are computed via Algorithm~\ref{alg:dtsdre} whenever the event-triggered condition~\eqref{threshold} is active. The simulations are performed with initial conditions~\eqref{eq:initial1} and parameters $h_2 = 6$, $a_2 = 4.1 \times 10^{-5}$, while the remaining parameters are taken from Table~\ref{tabmodel}. A longer time horizon $T=400$ is considered in order to highlight the limit cycle behavior of the dynamics. As observed in the figure, the control actions are activated only when the combined population $x_4 + x_5$ attains the alert level $L=2000$, confirming the effectiveness of the event-triggered strategy. Moreover, the quantity $x_4 + x_5$ reaches a minimum around day 82, within which the total number of released parasitoids is approximately $\int_0^{82} u(t)\,dt \approx 1405$ units of \emph{T.~galloi} (green curve in the bottom figure) and $\int_0^{82} v(t)\,dt \approx 21929$ units of \emph{C.~flavipes} (blue curve in the bottom figure).

\begin{figure}[!ht]
\begin{center}
	\includegraphics[width=1\linewidth]{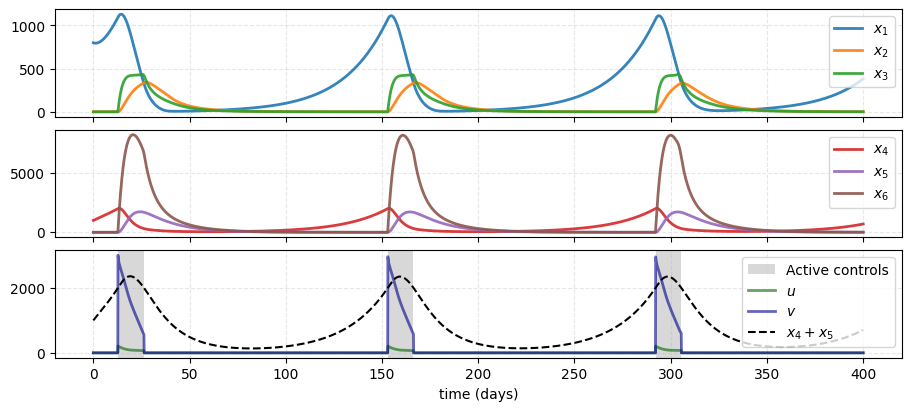}
\caption{
Temporal trajectories of populations $x_1$ to $x_6$ and controls $u$ and $v$  for system~\eqref{eq2pont3} with matrices from~\eqref{eq:matrixAx}-\eqref{eq:matricesSDRE} and control inputs computed by Algorithm~\ref{alg:dtsdre} whenever the event-triggered condition~\eqref{threshold} is active. Initial conditions \eqref{eq:initial1}, $a_2 = 4.1 \times 10^{-5}$,  $h_2 = 6$, and the other parameters from Table \ref{tabmodel}.
Time horizon is $T=400$ to emphasize the limit cycle behavior of the dynamics. 
The function $x_4+x_5$ attains a minimum around day 82. The total number of released parasitoids in the interval $[0,82]$ is $\int_0^{82} u(t)\,dt \approx 1405$ units of \emph{T.~galloi} (green curve in bottom figure) and $\int_0^{82} v(t)\,dt \approx 21929$ units of \emph{C.~flavipes} (blue curve in bottom figure). 
}
\label{Fig:SDRE_threshold}
\end{center}
\end{figure}
While the SDRE strategies perform well when continuous state monitoring is available, field conditions often permit only discrete, scheduled measurements and releases. We therefore turn next to a Lyapunov-based impulsive feedback approach designed precisely for this setting.

\color{black}
\subsection{Impulsive feedback control strategy} \label{impulsivo}

In this section, we propose a feedback stabilization strategy based on Lyapunov arguments combined with impulsive control actions. The main idea is to find a suitable Lyapunov function for a reduced subsystem capturing the key pest populations and to enforce a monotonic decrease of this function along system trajectories. We introduce state-dependent impulsive interventions acting on the populations of parasitoids. These impulses are designed through explicit threshold conditions ensuring that the derivative of the Lyapunov function remains strictly negative outside the origin. This approach leads to implementable feedback laws, activated at discrete times, which guarantee the reduction of the target populations with exponential decrease.

To this aim, we consider the
impulsive control system consisting of  model \eqref{eq1} for times $t$ in $ [0,T]\setminus \mathcal{T}$, where $\mathcal{T} := \{t_1,\ldots,t_N\}$ is the set of time instances of release of parasitoids in the environment, $N$ is the total number of releases, and the dynamics at $\mathcal{T}$ is given by
\begin{equation}\label{impcontrolsysT}
\begin{aligned}
    x_1(t_k^+) &= x_1(t_k^-) ,\quad
    x_2(t_k^+) = x_2(t_k^-),\quad
    x_3(t_k^+) = x_3(t_k^-) + \Delta_3(t_k),\quad t_k\in\mathcal{T},\\
    x_4(t_k^+) &= x_4(t_k^-),\quad 
    x_5(t_k^+) = x_5(t_k^-),\quad
    x_6(t_k^+) = x_6(t_k^-)  + \Delta_6(t_k),\quad
    t_k\in\mathcal{T},
\end{aligned}
\end{equation}
where $\Delta_3(t_k)$ and $\Delta_6(t_k)$ represent the amount of parasitoids released at time $t_k\in\mathcal{T}$. 

In order to obtain suitable values for the release quantities $\Delta_3(t_k)$ and $\Delta_6(t_k)$, we consider the {\em Lyapunov function}
\begin{equation}
\label{eq:Lyapfun}
    V(x) := \frac{1}{2}\left(x_1^2 + x_4^2\right)\, ,\qquad x = (x_1,\ldots,x_6)\in \mathcal{C}\,.
\end{equation}
The parasitized larval population $x_5$ is excluded from $V$ because it is
implicitly regulated through $x_4$: once un-parasitized larvae are suppressed,
the inflow to $x_5$ ceases and it decays to zero, so
$V = \tfrac{1}{2}(x_1^2 + x_4^2)$ is sufficient to certify convergence of all pest compartments. 
We get the following result:
\begin{proposition}
    \label{PropImpulses}
If one chooses $\Delta_j(t_k)$, for $j=3,6$,  satisfying
\begin{equation}\label{Deltas}
\Delta_j(t_k) \geq  G_j(x_1(t_k),x_4(t_k)) - x_j(t_k^-)\,,\quad j = 3,6\, ,
\end{equation}
for any time $t_k^-$, where
\begin{equation*}
    \begin{split}
    G_3(x_1,x_4) := \frac{1+a_1 h_1 x_1}{a_1 x_1}
   \left[ r\sigma_1\left(1 - \frac{x_1}{K}\right) + n_1\right] x_4\,,
   \\ 
G_6(x_1,x_4) := \frac{1+a_2 h_2 x_4}{a_2 x_4}
   \left[ r\sigma_1\left(1 - \frac{x_1}{K}\right) + n_1\right] x_1\,,
    \end{split}
\end{equation*}
then the Lyapunov function~\eqref{eq:Lyapfun} satisfies 
$\dot{V}(t_k)\le -\nu V(t_k)$ for all $k=1,\dots$, and for some $\nu>0$.
\end{proposition}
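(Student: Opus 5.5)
Compute $\dot V = x_1\dot x_1 + x_4\dot x_4$ along \eqref{eq1}, evaluated at $t_k^+$, right after the release. Only $x_3$ and $x_6$ jump at $t_k$, so $x_1(t_k)$ and $x_4(t_k)$ are well defined. We have
\begin{equation*}
\dot V = r\sigma_1\Bigl(1-\tfrac{x_1}{K}\Bigr)x_1x_4 - (m_1+n_1)x_1^2 - \frac{a_1x_1^2x_3}{1+a_1h_1x_1} + n_1x_1x_4 - (m_4+n_3)x_4^2 - \frac{a_2x_4^2x_6}{1+a_2h_2x_4}.
\end{equation*}
The plan is to let the two Holling terms absorb the cross term
\[
\Bigl[r\sigma_1\bigl(1-\tfrac{x_1}{K}\bigr)+n_1\Bigr]x_1x_4 .
\]
This term is nonnegative on $\mathcal{C}$ by Theorem~\ref{the:metzlerpos}, since $x_1\le K$. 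What remains is then two strictly negative diagonal terms.

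The key step uses the release condition \eqref{Deltas}, which gives $x_j(t_k^+) = x_j(t_k^-)+\Delta_j(t_k)\ge G_j(x_1(t_k),x_4(t_k))$. With $x_1,x_4>0$ and $x_3\ge G_3$, the definition of $G_3$ yields
\[
\frac{a_1x_1^2x_3}{1+a_1h_1x_1}\ \ge\ \Bigl[r\sigma_1\bigl(1-\tfrac{x_1}{K}\bigr)+n_1\Bigr]x_1x_4 .
\]
In the same way, $x_6\ge G_6$ makes the $x_6$ Holling term dominate the same cross term. Either inequality alone already cancels the cross term, and the other one only adds a further nonpositive contribution. Therefore
\[
\dot V(t_k)\le -(m_1+n_1)x_1^2-(m_4+n_3)x_4^2\le -2\min\{m_1+n_1,\,m_4+n_3\}\,V(t_k),
\]
so one may take $\nu := 2\min\{m_1+n_1, m_4+n_3\}>0$. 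This constant depends only on the parameters, not on $k$, which gives the claim for every release time.

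The main obstacle is the degenerate cases where $G_3$ or $G_6$ is undefined, namely $x_1(t_k)=0$ or $x_4(t_k)=0$. These need a separate argument.
\begin{itemize}
\item If $x_1=0$, the cross term vanishes and $\dot V = -(m_4+n_3)x_4^2 - (\text{Holling}_2)\le -(m_4+n_3)x_4^2$. The bound then holds with the same $\nu$, and $\Delta_3$ may be taken as $0$ (interpreting $G_3:=0$).
\item If $x_4=0$, the argument is symmetric.
\end{itemize}
In either case no release is needed. I would state this convention explicitly, for instance $G_j:=0$ when its denominator vanishes. I would also remark that the estimate holds at each instant where the inequality $x_j\ge G_j$ is enforced. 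Between releases, $x_3$ and $x_6$ may decay below $G_j$; this is why the statement is made only at the release times $t_k$.
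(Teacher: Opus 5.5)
Your proposal is correct and follows the paper's proof essentially step for step. It uses the same expansion of $\dot V = x_1\dot x_1 + x_4\dot x_4$ and the same constant $\nu = 2\min\{m_1+n_1,\,m_4+n_3\}$. It also relies on the same observation, that either $x_3\ge G_3$ or $x_6\ge G_6$ alone lets one Holling term absorb the cross term $\bigl[r\sigma_1(1-x_1/K)+n_1\bigr]x_1x_4$. Your explicit reading of $\dot V(t_k)$ as the right derivative at $t_k^+$, and your handling of the degenerate cases $x_1=0$ or $x_4=0$ (which the paper only covers with the phrase ``for $x_1\neq 0$''), tighten the argument without changing its route.
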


\begin{proof}
    Consider the Lyapunov function $V$ given in \eqref{eq:Lyapfun}. 
Its time derivative along the trajectories of the system satisfies
\begin{equation}
\label{Vleq0}
\begin{aligned}
    \frac{dV}{dt} &= x_1 \dot{x}_1 + x_4 \dot{x}_4 \\ 
    &\leq -\nu V 
    + \left[ r\sigma_1\left(1 - \frac{x_1}{K}\right) + n_1\right] x_1 x_4 
    - \frac{a_1 x_1^2 x_3}{1+a_1 h_1 x_1} 
    - \frac{a_2 x_4^2 x_6}{1+ a_2 h_2 x_4} \,,
\end{aligned}
\end{equation}
where $\nu := 2\min\{m_1+n_1,\; m_4+n_3\}$. Setting 
\[
\Xi:= \left[ r\sigma_1\left(1 - \frac{x_1}{K}\right) + n_1\right] x_1 x_4 
    - \frac{a_1 x_1^2 x_3}{1+a_1 h_1 x_1} 
    - \frac{a_2 x_4^2 x_6}{1+ a_2 h_2 x_4}\,,
\]
we aim to enforce $\Xi \leq 0$.
A sufficient condition for $\Xi \leq 0$ is
\begin{equation}
    \left[ r\sigma_1\left(1 - \frac{x_1}{K}\right) + n_1\right] x_1 x_4 
    \leq 
    \frac{a_1 x_1^2 x_3}{1+a_1 h_1 x_1}\,,
\end{equation}
which, for $x_1\neq 0$, is equivalent to the inequality
\begin{equation}\label{eq:impcond3}
   x_3 \geq G_3(x_1,x_4).
\end{equation}
Similarly, an alternative sufficient condition for $\Xi \leq 0$ is given by
\begin{equation}
    \left[ r\sigma_1\left(1 - \frac{x_1}{K}\right) + n_1\right] x_1 x_4  
    \leq 
    \frac{a_2 x_4^2 x_6}{1+ a_2 h_2 x_4}\,,
\end{equation}
which, for $x_4\neq 0$, yields the condition
\begin{equation}\label{eq:impcond6}
   x_6 \geq G_6(x_1,x_4).
\end{equation}
Note that $G_j(x_1,x_4)\ge 0,$ for any $x\in\mathcal{C}$, $j =3,6$. 
\end{proof}

The latter result motivates a {\em feedback impulsive control law}, which guarantees the exponential stability of the resulting closed-loop system.
More precisely, let $t_1=0$ and, for $k\ge 2$, define $t_k$ as the first time $t>t_{k-1}$ at which condition~\eqref{Deltas} is violated for some $j$. Then, it follows that $\dot V(t)\le -\nu V(t)$ for all $t\ge 0$, for some $\nu>0$. Consequently, the system~\eqref{eq1}--\eqref{impcontrolsysT} is exponentially stable.

The main limitation of Proposition \ref{PropImpulses} is that, in principle, it does not ensure a finite number of impulsive releases; that is, $\mathcal{T}$ may have infinitely many elements. For implementation purposes, we design the impulsive feedback law as follows: given a maximum number $N$ of releases over the time horizon $T$, we consider a release frequency $\tau = T/N$. Assuming that we can measure $x_1$ and $x_4$ at times $t_k := k\tau$ for any $k=0,\ldots ,N-1$, we define the impulsive feedback controls
\begin{equation}
\label{eq:Delta3}
    \Delta_3(k\tau) := \max \left\{ G_3\big(x_1(k\tau),x_4(k\tau)\big) - x_3(k\tau^-),\, 0 \right\}
\end{equation}
and
\begin{equation}
\label{eq:Delta6}
    \Delta_6(k\tau) := \max \left\{ G_6\big(x_1(k\tau),x_4(k\tau)\big) - x_6(k\tau^-),\, 0 \right\}.
\end{equation}
The simulations of system~\eqref{eq1}-\eqref{impcontrolsysT} with impulses \eqref{eq:Delta3}-\eqref{eq:Delta6} are presented in Figure~\ref{fig:impulsive_II}, where we display the state trajectories together with the control impulses. 
The numerical experiments are performed using the initial conditions in \eqref{eq:initial1}, with parameters $a_2 = 4.1\times 10^{-5}$ and $h_2 = 6$, and all remaining parameters from Table \ref{tabmodel}. The time horizon is $T=70$ and the release frequency is $\tau=7$ days.
Over the interval $[0,70]$, the total number of released parasitoids is given by $\int_0^{70} u(t),dt \approx 4678$ units of \emph{T.~galloi} (gray dotted curve in the left panel) and $\int_0^{70} v(t),dt \approx 6660$ units of \emph{C.~flavipes} (gray dotted curve in the right panel). The dynamics show that the aggregate population $x_1 + x_4 + x_5$ decreases to a final value of approximately $114$, while the peak value of $x_4 + x_5$ remains around $1109$, which is well below the economic damage threshold considered in the previous sections. These results indicate that the proposed impulsive feedback control strategy is effective in keeping the pest population under control while maintaining moderate levels of parasitoid release.
\begin{figure}[!ht]
    \centering
    \includegraphics[width=1\linewidth]{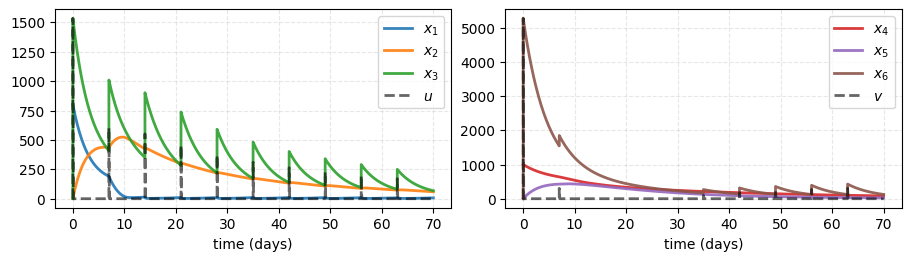}
    \caption{
    State trajectories and controls for the impulsive system  \eqref{eq1}-\eqref{impcontrolsysT}, with feedback impulses $\Delta_3(k\tau)$ and $\Delta_6(k\tau)$ given by~\eqref{eq:Delta3} and \eqref{eq:Delta6}, respectively.
    Initial conditions \eqref{eq:initial1}, $a_2 = 4.1\times 10^{-5}$,  $h_2 = 6$, and the other parameters from Table \ref{tabmodel}.
Time horizon is $T=70$ and release frequency $\tau=7.$  
 The total number of released parasitoids in the interval $[0,70]$ is $\int_0^{70} u(t)\,dt \approx 4678$ units of \emph{T.~galloi} (gray dotted curve in left figure) and $\int_0^{70} v(t)\,dt \approx 6660$ units of \emph{C.~flavipes} (gray dotted curve in right figure). The final value for $x_1+x_4+x_5$ is about 114 units. The maximum value of $x_4+x_5$ is approximately 1109, which is far from the economic damage threshold.
 }
\label{fig:impulsive_II}
\end{figure}

In the next section, we compare the different control strategies presented in this section and discuss other results of the article.

\section{Discussion}
\label{Sec:Discussion}

The occurrence of bifurcations in the model when varying the handling time of the larval parasitoid ($h_2$) reflects the transition between controlled laboratory stability and the complex oscillatory dynamics observed under field conditions. In environments with controlled temperature and the absence of physical barriers, the time the parasitoid spends to subdue and oviposit in the borer approaches its lower limit ($h_2 \approx 0$), which maximizes the per capita attack rate ($1/h_2$). However, the adverse conditions of the sugarcane ecosystem drastically alter this scenario. The decrease in temperature and the occurrence of rainy periods, typical of the sugarcane harvest season in Brazil, not only reduce the metabolic efficiency of the parasitoid but also intensify the defensive behavior of the host. This increase in biological and climatic difficulties expands the value of $h_2$ along the critical interval $[0.001, 12]$, acting as the mathematical trigger for the loss of fixed-point stability and the subsequent emergence of limit cycles through bifurcations.

This sensitivity of the model to variations in $h_2$ is grounded in the very nature of the foraging and management process of the parasitoid \textit{Cotesia flavipes} across the field-laboratory gradient. As pointed out by \cite{Wiedenmann}, parasitism efficiency estimates obtained in the laboratory frequently overestimate the natural enemy's reproductive success, as they disregard the time costs of searching in complex habitats. In the field, the success of biological control depends on a two-step search: habitat location (induced by plant volatiles) and borer location within the stalk, mediated by short-range cues such as the frass left by the insect. The severity of the environment and the vigor of the larvae in the field reduce effective encounter rates, severely inflating the handling time ($h_2$) as a function of climatic adversities and borer defenses.

In what respects to the control techniques presented in Section \ref{SecControl}, the three control strategies considered in this paper---optimal control in Section~\ref{SecOC}, SDRE-based feedback control in Section~\ref{sec:SDRE}, and impulsive feedback control in Section~\ref{impulsivo}---exhibit distinct trade-offs between control effort and effectiveness in suppressing the pest population.

The optimal control approach (Figure \ref{fig:ocp1}) achieves a strong reduction of the targeted populations while explicitly enforcing the terminal constraint \(x_1(T)+x_4(T)+x_5(T)\leq 25\), ensuring a very low final infestation level (approximately \(1\%\) of the economic damage threshold). This high level of performance, however, comes at the expense of a relatively large cumulative release of \emph{C.~flavipes} (about \(17061\) units), while the release of \emph{T.~galloi} remains moderate (approximately \(4584\) units).

The SDRE-based feedback strategy (Figure \ref{figure1} in Section~\ref{SDRE-continuous}) provides a comparable qualitative behavior in terms of maintaining the pest population below the economic threshold throughout the time horizon, with a final value \(x_1(T)+x_4(T)+x_5(T)\approx 53\), slightly higher than in the optimal control case. Interestingly, this method significantly reduces the required amount of \emph{T.~galloi} (about \(2092\) units), while keeping a similar level of \emph{C.~flavipes} release (approximately \(17049\) units). 
Notice that the SDRE formulation does not explicitly enforce the control bounds~\eqref{eq:controlbounds}, the state constraint~\eqref{stateconstraint}, as well as the final time constraint~\eqref{eq:finalconstraint} of the optimal control framework. 
Hence, SDRE-based feedback  offers a robust and  efficient use of the control agents.

In contrast, the impulsive feedback control (Figure \ref{fig:impulsive_II}) leads to a higher final value of the aggregate population (around \(114\)), indicating a less aggressive suppression compared to the previous two strategies. Nevertheless, it still guarantees that the pest population remains well below the economic damage threshold at all times, with a peak value of \(x_4+x_5\) around \(1109\). A key advantage of this approach lies in its substantially lower cumulative release of \emph{C.~flavipes} (approximately \(6660\) units), combined with a moderate use of \emph{T.~galloi} (about \(4678\) units). Moreover, the impulsive nature of the control, with interventions at discrete times, may be more practical for real-world implementation.

The SDRE event-triggered strategy of Figure \ref{Fig:SDRE_threshold} is not directly comparable to the other approaches, since it is designed with a different objective. In particular, the control action is activated only when the state exceeds a prescribed threshold, whereas the other strategies determine release actions that aim at reducing a set of desired states at all times.

The performance comparison is summarized in Table \ref{tabelacontrole}.
Overall, the optimal control strategy provides the strongest suppression but at a higher cost in terms of control effort. The SDRE feedback offers a good compromise between performance and reduction of \emph{T.~galloi} releases, while the impulsive strategy stands out for its practical implementation and significantly reduced total releases, at the expense of a less stringent final suppression.

\begin{table}[ht]
\caption{Comparison of the three control strategies over the time horizon $T=70$, with initial conditions~\eqref{eq:initial1}, and parameter values $a_2=4.1 \times 10^{-5}$ and $h_2=6$, and all other remaining parameters from Table~\ref{tabmodel}.}\label{tabelacontrole}
\begin{tabular}{lcccc}
\toprule
\textbf{Technique} & \textbf{Final value} $(x_1+x_4+x_5)(T)$ & $\int_0^{T} u(t)\,dt$ & $\int_0^{T} v(t)\,dt$ & \textbf{Max} $(x_4+x_5)$ \\
\midrule
Optimal Control  & $\approx 25$ (constraint) & $\approx 4584$ & $\approx 17061$ & $\approx 1147$ \\
SDRE Feedback  & $\approx 53$ & $\approx 2092$ & $\approx 17049$ & $\approx 1322$ \\
Impulsive Feedback  & $\approx 114$ & $\approx 4678$ & $\approx 6660$ & $\approx 1109$ \\
\botrule
\end{tabular}
\end{table}

The investigation of control strategies for {\it D. saccharalis} continue to evolve. In~\cite{Gomes}, the authors showed that increasing the number of releases from three to five at weekly intervals expands temporal coverage and enhances synchronization with the host’s short egg stage, leading to greater biological control effectiveness. To show the consistency of this study with field practices, two impulsive control simulations were carried out and compared with current practices in sugarcane fields, as presented in \cite{Gomes}. The first simulation considered releases over five consecutive weeks, using the impulsive feedback formulae \eqref{eq:Delta3}-\eqref{eq:Delta6}, without imposing a fixed number of parasitoids. The corresponding trajectories are illustrated in Figure~\ref{fig:impulsive_feedback_5releases}. 
\begin{figure}
    \centering
    \includegraphics[width=\linewidth]{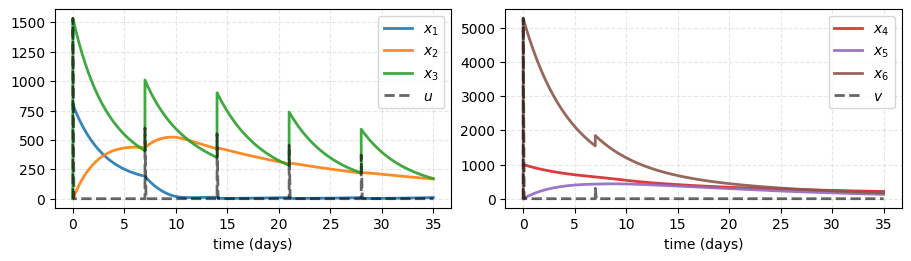}
    \caption{State trajectories and controls for the impulsive system \eqref{eq1}-\eqref{impcontrolsysT}, with feedback impulses $\Delta_3(n\tau)$ and $\Delta_6(n\tau)$ given by \eqref{eq:Delta3} and \eqref{eq:Delta6}, respectively.
    Initial conditions \eqref{eq:initial1}, $a_2 = 4.1\times 10^{-5}$,  $h_2 = 6$, and the other parameters from Table \ref{tabmodel}.
Time horizon is $T=35$, since we fix the total number of releases to $5$ and the release frequency to $\tau=7$ days. 
The total number of released parasitoids in the interval $[0,35]$ is $ \approx 3512 $ units of \emph{T.~galloi} (gray dotted curve in left figure) and $\approx 5578$ units of \emph{C.~flavipes} (gray dotted curve in right figure). The final value for $x_1+x_4+x_5$ is about 348 units.}
\label{fig:impulsive_feedback_5releases}
\end{figure}
The second simulation also considered releases over five consecutive weeks, but with a fixed amount of $\Delta_3(0)$ for the {\em T. galloi} and $\Delta_6(0)$ for the {\em C. flavipes} parasitoids released each week, with a total of 5 releases, according to~\cite{Gomes}. The resulting trajectories of the system are presented in Figure~\ref{fig:impulsive_constant_5releases}.

\begin{figure}
    \centering
    \includegraphics[width=\linewidth]{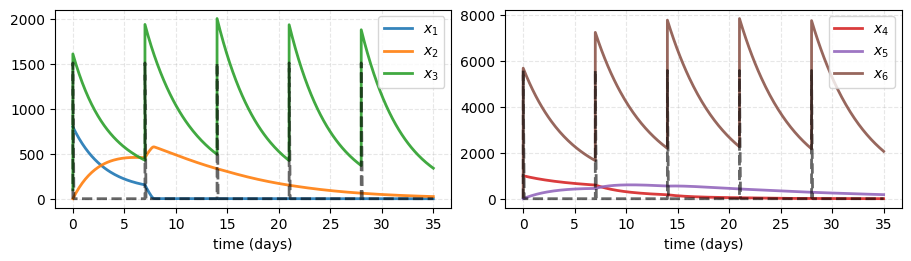}
    \caption{
    State trajectories and controls for the impulsive system \eqref{eq1}-\eqref{impcontrolsysT}, with {\bf constant} impulse magnitude $\Delta_3(0)$ and $\Delta_6(0),$ as defined in \eqref{eq:Delta3} and \eqref{eq:Delta6}, respectively.
    Initial conditions~\eqref{eq:initial1}, $a_2 = 4.1\times 10^{-5}$,  $h_2 = 6$, and the other parameters from Table \ref{tabmodel}.
The time horizon is $T=35$, since we fix the total number of releases to $5$ and the release frequency to $\tau=7.$  
 The total number of released parasitoids in the interval $[0,35]$ is $ \approx 7565 $ units of \emph{T.~galloi} (gray dotted curve in left figure) and $\approx 27895$ units of \emph{C.~flavipes} (gray dotted curve in right figure). The final value for $x_1+x_4+x_5$ is about 185 units.}
    \label{fig:impulsive_constant_5releases}
\end{figure}

Comparing the two simulations illustrated in Figures \ref{fig:impulsive_feedback_5releases} and \ref{fig:impulsive_constant_5releases}, we observed that, although the system is controlled in both cases,
the releases calculated over five weeks using the feedback impulsive control described in Section \ref{impulsivo} and determined without imposing a fixed number of parasitoids to be released, may result in significant savings in parasitoid production.

\section{Conclusion and perspectives}\label{SecConclusion}

In this study, a host-parasitoid system was considered to describe the behavior and biological control of the sugarcane borer by means of two of its parasitoids: {\it T. galloi} for the egg stage, and {\it C. flavipes} for the larval stage. The interaction between host populations and their parasitoids was modeled using a Holling Type II functional response, as experimental results indicated saturation in parasitism.

In terms of dynamic behavior, the system exhibited stable equilibrium points as well as limit cycles, depending on the values of the parameters of the Holling pairs $(a_1,h_1)$ and $(a_2,h_2)$.

We determined the five equilibrium points of the system, and performed a stability analysis of the extinction equilibrium, considered a potential objective in the context of biological pest control. However, both the coexistence equilibrium and the dynamics of the system in the presence of limit cycles entail a level of the pest population well above the economic damage threshold, which makes necessary to consider control strategies to mitigate the pest population below acceptable levels.

Three strategies were considered for biological pest control: optimal control, SDRE, and impulsive control. The numerical simulations indicate that, although continuous controls yield theoretically smoother trajectories, the impulsive control strategy emerges as the most feasible approach from both logistical and economic perspectives for sugarcane producers. In particular, it requires even smaller quantities of released parasitoids while still achieving highly satisfactory results. Quantitative comparisons among the different control strategies are presented in Table~\ref{tabelacontrole}.

We expect this study to serve as a basis for new approaches in the use of biological control and provide a decision-support tool that can assist biofactories and producers in optimizing the number of parasitoids to be released, thus reducing operational costs as well as the use of chemical pesticides.

Several directions remain open for future investigation.
The present model assumes constant environmental conditions, but sugarcane production (in Brazil and other locations) is strongly seasonal: temperature, rainfall and plant
phenology all vary over the growing cycle and modulate parasitoid attack rates and host development times.
Extending the framework to a periodically forced or Markov-switching system would allow the bifurcation structure identified here --- in particular the critical handling-time range for $h_2$ --- to be re-examined under realistic agronomic calendars.
A second priority is the experimental validation of the impulsive feedback
law~\eqref{eq:Delta3}--\eqref{eq:Delta6}: the release thresholds and
dosages prescribed by the control law should be tested against field-release records.
Finally, a formal parameter identifiability analysis for the Holling Type~II
pairs $(a_i, h_i)$, $i = 1, 2$, is needed before the model can be reliably
fitted to time-series data: structural identifiability determines whether
both the attack rate and the handling time can in principle be recovered from
population counts, while practical identifiability quantifies the
data quality and experimental design required to estimate them with sufficient
precision for control design.

Together, the six-dimensional host--parasitoid model, the bifurcation
analysis of the handling-time parameters, and the three complementary
control strategies --- optimal, SDRE feedback, and impulsive Lyapunov-based
--- establish a rigorous mathematical foundation for data-driven biological
control of \textit{D. saccharalis}, and raise concrete questions about
seasonal robustness, field validation, and parameter identifiability that
we intend to pursue in future work.

\backmatter

\section*{Declarations}

\subsection*{Funding}

This work was carried out within the framework of two collaborative projects: “Controle biológico de insetos-praga: modelagem matemática da dinâmica de sistemas hospedeiros-parasitoides e otimização da produção massal de parasitoides em laboratório”, funded by the FGV Research Network and coordinated by M.S.A., and NSERC Alliance Grant ALLRP 599133-24, coordinated by R.G. and funded by the Natural Sciences and Engineering Research Council of Canada (NSERC). Most of the research was conducted while F.A. was a Master's student at FGV EMAp supported by CAPES, and B.L. was a postdoctoral fellow at the same institution. M.S.A. was additionally supported by FAPERJ grants E-26/210.037/2024 and 260003/020457/2025, and by CNPq grant 312407/2023-8. R.G. also acknowledges support from NSERC through Discovery Grant RGPIN-2021-02632. A.M. was supported by the Research Support Foundation of the State of Rio Grande do Sul (FAPERGS), grant number 24/2551-0001559-0.

\subsection*{Conflict of Interest}

The authors declare that they have no conflicts of interest.

\subsection*{Code Availability}

All simulation, analysis, and optimization codes supporting this study's discussion are publicly available in the following GitHub repository: \url{https://github.com/fredsonaguiar/parasitoid-sugarcane-control}.

\subsection*{Author Contribution}

All authors contributed to the conceptualization, mathematical modeling, and control design.
Numerical experiments and optimization were performed by F.A., M.S.A., and A.M.;
mathematical analysis were performed by M.S.A., R.G., B.L., and A.M.;
the first draft of the manuscript was written by M.S.A., R.G., B.L. and A.M., and all authors commented on previous versions of the manuscript.
All authors read and approved the final manuscript.

\bibliography{bibliography}

\appendix

\section{Equilibria of the system - Proof of Proposition~\ref{prop:equilibria}}
\label{AppEqui}

\begin{proof}
Equilibrium points are determined as steady-states of system~\eqref{eq1}, that is, solutions $E^* = (x_1^*, x_2^*,x_3^*, x_4^*,x_5^*, x_6^*)$ of the system
\begin{equation}\label{eq:steadystatesystem}
\left\{
\begin{array}{l}
\displaystyle r \sigma_1 x_4 \left(1 - \frac{x_1}{K} \right) - (m_1 + n_1)x_1 - \frac{a_1 x_1 x_3}{1 + a_1 h_1 x_1} =0 \\[1ex]
\displaystyle \frac{a_1 x_1 x_3}{1 + a_1 h_1 x_1} - (m_2 + n_2)x_2 =0\\[1ex]
\displaystyle \gamma_3 n_2 x_2 - m_3 x_3 =0\\[1ex]
\displaystyle  n_1 x_1 - (m_4 + n_3)x_4 - \frac{a_2 x_4 x_6}{1 + a_2 h_2 x_4} =0\\[1ex]
\displaystyle
\frac{a_2 x_4 x_6}{1 + a_2 h_2 x_4} - m_5 x_5 - n_4 x_5 =0\\[1ex]
\displaystyle
\gamma_6 n_4 x_5 - m_6 x_6=0
\end{array}
\right.
\end{equation}
In the following, we denote {with a star $*$ the} steady state components. From the third and sixth equations, we get that
\begin{equation}\label{eq:x2x3x5x6}
x_3^* = \frac{\gamma_3 n_2}{m_3} x_2^*\, ,\qquad
x_6^* = \frac{\gamma_6 n_4}{m_6} x_5^*\,,
\end{equation}
respectively.
Plugging them into the second and fifth equations in \eqref{eq:steadystatesystem}, we get
\begin{align}\label{eq:beforecase1a}
0 = \left[\frac{a_1 x_1^*}{1 + a_1 h_1 x_1^*} \frac{\gamma_3 n_2}{m_3} - (m_2 + n_2)\right] x_2^*\, ,\qquad
\end{align}
and
\begin{align}\label{eq:beforecase1}
0 = \left[ \frac{a_2 x_4^*}{1 + a_2 h_2 x_4^*} \frac{\gamma_6 n_4}{m_6}  - (m_5 + n_4) \right] x_5^*\,,
\end{align}
respectively, which imply one of the following four cases.\newline
{\bf Case 1:} If \(x_2^* = x_5^* = 0\), then~\eqref{eq:x2x3x5x6} implies that \(x_3^* = x_6^* = 0\). From the fourth equation in~\eqref{eq:steadystatesystem} we deduce that
\begin{equation}\label{eq:x1x4}
x_4^* = \frac{n_1}{m_4 + n_3} x_1^*\, .
\end{equation}
{\bf Case 1.1:} If $x_1^* = 0$, we get the trivial equilibrium point $E^0 = (0,0,0,0,0,0)$.\newline
{\bf Case 1.2:} If $x_1^* \neq 0$, we determine the parasitoid-free equilibrium $E^{Pf} = (x_1^{Pf},0,0,x_4^{Pf},0,0)$, where from the first equation in~\eqref{eq:steadystatesystem} and~\eqref{eq:x1x4} we obtain that
\begin{equation}
\label{eqPf}
x_1^{Pf} = K \left[1 - \frac{(m_1 + n_1)(m_4 + n_3)}{r\sigma_1 n_1}\right]\, ,\qquad
x_4^{Pf} = \frac{n_1}{m_4 + n_3} x_1^{Pf}\, .    
\end{equation}
Notice that $x_1^{Pf} < K$ and so it is biologically feasible provided that Condition~\ref{assump:equilibria}.\ref{itma} is satisfied.\newline
{\bf Case 2:} If \(x_2^* = 0\) and \(x_5^*\neq 0\), then from~\eqref{eq:x2x3x5x6} and~\eqref{eq:beforecase1} we get that $x_3^*=0$ and
\begin{equation}\label{eq:case2x4}
x_4^* = \frac{(m_5 + n_4) m_6}{a_2 \gamma_6 n_4 - (m_5 + n_4) a_2 h_2 m_6}\,,
\end{equation}
respectively, provided that $a_2 \big(\gamma_6 n_4 - (m_5 + n_4) h_2 m_6\big) \neq 0$. Moreover, Condition~\ref{assump:equilibria}.\ref{itmb} ensures that the state $x_4^*$ is biologically interpretable. 
Then, from the first equation in~\eqref{eq:steadystatesystem} we get that
\begin{equation}\label{eq:case2x1}
x_1^* = K\,\frac{r\sigma_1 x_4^*}{r\sigma_1 x_4^* + K(m_1 + n_1)}\,,
\end{equation}
which implies $x_1^*<K$. From the fourth equation of~\eqref{eq:steadystatesystem} we get an expression for $x_6^*$, namely
\[
x_6^* = \left[n_1 x_1^* - (m_4 + n_3)x_4^*\right]\frac{1 + a_2 h_2 x_4^*}{a_2 x_4^*}\,,
\]
which makes biological sense whenever \(n_1 x_1^* - (m_4 + n_3)x_4^* \ge 0\), with $x_1^*$ and $x_4^*$ given by~\eqref{eq:case2x1} and~\eqref{eq:case2x4}, respectively. 
We impose strict positivity in the latter inequality, since otherwise we would get $x_6^*=0$ and thus $x_5^* =0$ from the sixth equations in~\eqref{eq:steadystatesystem}, which is analyzed in {\bf Cases~1} and {\bf 3}. Notice that the inequality \(n_1 x_1^* - (m_4 + n_3)x_4^* > 0\), with $x_1^*$ and $x_4^*$ given by~\eqref{eq:case2x1} and~\eqref{eq:case2x4}, respectively, can be expressed in terms of the system's parameters, and takes the form of Condition~\ref{assump:equilibria}.\ref{itmc}.
Finally, from the sixth equation in~\eqref{eq:steadystatesystem}, we can write $x_5^*$ in terms of $x_6^*$. Thus, under Conditions~\ref{assump:equilibria}.\ref{itmb}-\ref{itmc}, we determine the equilibrium in the absence of population of adult egg parasitoids \emph{T. galloi}, given by $E^{Cf}  = (x_1^{Cf},0,0,x_4^{Cf},x_5^{Cf},x_6^{Cf})$ with positive
\[
x_1^{Cf} = \frac{K r \sigma_1 x_4^{Cf}}{r\sigma_1 x_4^{Cf} + K(m_1 + n_1)}\,,\qquad 
x_4^{Cf} = \frac{(m_5 + n_4) m_6}{a_2 \gamma_6 n_4 - (m_5 + n_4) a_2 h_2 m_6}\,,
\]
\[
x_6^{Cf} = \left[n_1 x_1^{Cf} - (m_4 + n_3)x_4^{Cf}\right]\frac{1 + a_2 h_2 x_4^{Cf}}{a_2 x_4^{Cf}}\,,
\qquad x_5^{Cf} = \frac{m_6}{\gamma_6 n_4} x_6^{Cf}\, .
\]
{\bf Case 3:} If \(x_2^* \neq 0\) and \(x_5^*= 0\), then from~\eqref{eq:x2x3x5x6} and~\eqref{eq:beforecase1a} we get that $x_6^*=0$ and
\begin{equation}\label{eq:case1x1}
x_1^* = \frac{(m_2 + n_2) m_3}{a_1 \big( \gamma_3 n_2 - (m_2 + n_2) m_3 h_1 \big)}\,, 
\end{equation}
which is biologically reasonable provided Condition~\ref{assump:equilibria}.\ref{itmd} 
and
\begin{equation}
\label{pos6}
\frac{(m_2 + n_2) m_3}{a_1 \big( \gamma_3 n_2 - (m_2 + n_2) m_3 h_1 \big)} \le K\, .
\end{equation}
From the fourth equation in~\eqref{eq:steadystatesystem} we get that
\begin{equation}\label{eq:case3x1x4}
x_4^* = \frac{n_1}{m_4 + n_3}x_1^*,
\end{equation}
and from the first equation in~\eqref{eq:steadystatesystem} we deduce an expression for $x_3^*$ in terms of $x_1^*$ and $x_4^*$:
\begin{equation}\label{eq:relationx3withx1x4}
x_3^* = \left[ r\sigma_1 x_4^* \left(1 - \frac{x_1^*}{K} \right) - (m_1 + n_1)x_1^* \right] \frac{1 + a_1 h_1 x_1^*}{a_1 x_1^*} \, ,   
\end{equation}
which makes biological sense whenever 
\begin{equation}
    \label{pos3}
    x_1^* < K\,\frac{r \sigma_1 x_4^* }{r \sigma_1 x_4^* + K(m_1 + n_1)}\,, 
\end{equation}
for $x_1^*$ and $x_4^*$ given in \eqref{eq:case1x1} and \eqref{eq:case3x1x4}, respectively. Using the expressions of $x_1^*$ and $x_4^*$ for the current case, the inequality~\eqref{pos3} imposes a constraint on the model parameters, given by Condition~\ref{assump:equilibria}.\ref{itme}. 
Moreover, notice that $x_1^*$ in~\eqref{pos3} always satisfies the condition $x_1^* < K$, so in particular Condition~\ref{assump:equilibria}.\ref{itme} implies~\eqref{pos6}. 
So, under Conditions~\ref{assump:equilibria}.\ref{itmd}-\ref{itme}, we determine the equilibrium point of the system in the absence of population of adult larvae parasitoids \emph{C. flavipes}, given by $E^{Tg}  = (x_1^{Tg},x_2^{Tg},x_3^{Tg},x_4^{Tg},0,0)$ with {positive}
\[
x_1^{Tg} = \frac{(m_2 + n_2) m_3}{a_1 \gamma_3 n_2 - (m_2 + n_2) a_1 h_1 m_3}\, ,\qquad x_4^{Tg} = \frac{n_1}{m_4 + n_3} x_1^{Tg}\,,
\]
\[
x_3^{Tg} = \left[ r\sigma_1 x_4^{Tg} \left(1 - \frac{x_1^{Tg}}{K} \right) - (m_1 + n_1)x_1^{Tg} \right] \frac{1 + a_1 h_1 x_1^{Tg}}{a_1 x_1^{Tg}}\, , \qquad
x_2^{Tg} = \frac{m_3} {\gamma_3 n_2}x_3^{Tg}\, .
\]
{\bf Case 4:} If \(x_2^* \neq 0\) and \(x_5^*\neq 0\), then from~\eqref{eq:beforecase1a} and~\eqref{eq:beforecase1} we get that
\begin{equation}\label{eq:case4x1}
x_1^* = \frac{(m_2 + n_2) m_3}{a_1 \big[ \gamma_3 n_2 - (m_2 + n_2) m_3 h_1 \big]} 
\end{equation}
and
\begin{equation}\label{eq:case4x4}
x_4^* = \frac{(m_5 + n_4) m_6}{a_2 \big[\gamma_6 n_4 - (m_5 + n_4) h_2 m_6\big]}\,,
\end{equation}
respectively, under Conditions~\ref{assump:equilibria}.\ref{itmb}-\ref{itmd} and~\eqref{pos6}. From the first and fourth equations in~\eqref{eq:steadystatesystem} we derive an expression of $x_3$ and $x_6$ in terms of $x_1$ and $x_4$, and obtain that
\begin{align*}
x_3^* = \left[ r\sigma_1 x_4^* \left(1 - \frac{x_1^*}{K} \right) - (m_1 + n_1)x_1^* \right] \frac{1 + a_1 h_1 x_1^*}{a_1 x_1^*}
\end{align*}
and
\begin{align*}
x_6^* = \left[n_1 x_1^* - (m_4 + n_3)x_4^*\right]\frac{1 + a_2 h_2 x_4^*}{a_2 x_4^*}\, ,
\end{align*}
respectively. As before in {\bf Cases 2} and {\bf 3}, these expressions are biologically feasible provided that the terms in the square brackets are positive, which is ensured by Conditions~\ref{assump:equilibria}.\ref{itmf}-\ref{itmg}. 
In particular, the positivity of $x_3^*$ implies that $x_1^*$ must satisfy~\eqref{pos3}, which ensures that $x_1^* < K$, and so we can drop the assumption~\eqref{pos6} since Condition~\ref{assump:equilibria}.\ref{itmf} imposes a stronger condition on the system's parameters. 
So, under Conditions~\ref{assump:equilibria}.\ref{itmb}-\ref{itmd}-\ref{itmf}-\ref{itmg},
we determine the co-existence equilibrium $E^{\rm co} = (x_1^{co},x_2^{co},x_3^{co},x_4^{co},x_5^{co},x_6^{co})$,  with
\[
x_1^{co} = \frac{(m_2 + n_2) m_3}{a_1 \left[\gamma_3 n_2 - (m_2 + n_2) h_1 m_3\right]}\, ,\qquad 
x_4^{co} = \frac{(m_5 + n_4) m_6}{a_2 \left[ \gamma_6 n_4 - (m_5 + n_4) h_2 m_6\right]}\,,
\]
\[
x_3^{co} = \left[ r\sigma_1 x_4^{co} \left(1 - \frac{x_1^{co}}{K} \right) - (m_1 + n_1)x_1^{co} \right] \frac{1 + a_1 h_1 x_1^{co}}{a_1 x_1^{co}} \,,
\qquad x_2^{co} = \frac{m_3}{\gamma_3 n_2} x_3^{co}\, ,
\]
\[
x_6^{co} = \left[n_1 x_1^{co} - (m_4 + n_3)x_4^{co}\right]\frac{1 + a_2 h_2 x_4^{co}}{a_2 x_4^{co}} \, , \qquad
x_5^{co} = \frac{m_6}{\gamma_6 n_4} x_6^{co}\, . 
\]
In particular, $x_i^{co} > 0$ for all $i = 1,\ldots,6$, and $x_1^{co} < K$. 
\end{proof}

\section{Stability of Equilibria - Proof of Proposition~\ref{prop:staborigin}}
\label{appendixB}

\begin{proof}
The Jacobian matrix of system~\eqref{eq1} has the expression
\begin{equation} \label{eq:jacobian}
    \hat{J} = \begin{pmatrix}
        \hat{J_1} \, | \, \hat{J_2}
    \end{pmatrix}
\end{equation}
where
\begin{equation} \label{eq:jacobian1}
    \hat{J_1} = \begin{pmatrix}
        -\frac{r \sigma_1 x_4}{K} - m_1 - n_1-\frac{a_1 x_3}{(1 + a_1h_1x_1)^2}  & 0 & -\frac{a_1x_1}{1+a_1h_1x_1}\\
         \frac{a_1 x_3}{(1 + a_1h_1x_1)^2}  &  -m_2-n_2 & \frac{a_1x_1}{1+a_1h_1x_1}\\
         0 &  \gamma_3 n_2 & -m_3\\
         n_1 & 0 & 0\\
         0 & 0 & 0\\
         0 & 0 & 0
    \end{pmatrix}
\end{equation}
and
\begin{equation} \label{eq:jacobian2}
{\hat{J_2}=\begin{pmatrix}
r \sigma_1 \left(1 - \frac{x_1}{K}\right) & 0 & 0 \\
0 & 0 & 0 \\
0 & 0 & 0 \\
-m_4 -n_3 - \frac{a_2x_6}{(1 + a_2h_2x_4)^2} & 0 & -\frac{a_2x_4}{1 + a_2h_2x_4} \\
\frac{a_2x_6}{(1 + a_2h_2x_4)^2} & -m_5-n_4 & \frac{a_2x_4}{1 + a_2h_2x_4}\\
0 & \gamma_6  n_4 & -m_6
\end{pmatrix}}\;.
\end{equation}
Then the Jacobian matrix at $E^0 = (0,0,0,0,0,0)$ is
\begin{equation} \label{eq:jacobianatorigin}
\hat{J}(E^0)=\begin{pmatrix}
- m_1 - n_1  & 0 & 0 &  r \sigma_1  & 0 & 0 \\
 0  &  -m_2-n_2 & 0 & 0 & 0 & 0 \\
 0 &  \gamma_3
 n_2 & -m_3 & 0 & 0 & 0 \\
 n_1 & 0 & 0 & -m_4 -n_3  & 0 & 0 \\
 0 & 0 & 0 & 0 & -m_5-n_4 & 0 \\
 0 & 0 & 0 & 0 & \gamma_6 n_4 & -m_6
    \end{pmatrix}.
\end{equation}
A direct computation shows that $\hat{J}(E^0)$ has the following eigenvalues:
\[
\lambda_1 = -m_6,\qquad \lambda_2 = -m_5 - n_4,\qquad \lambda_3 = -m_3,\qquad \lambda_4 = -m_2 - n_2,
\]
\[
\lambda_5 = \frac{-b -\sqrt{b^2 - 4c}}{2}\;,\qquad \lambda_6 = \frac{-b +\sqrt{b^2 - 4c}}{2}\;,
\]
where
\[
b= m_1 + n_1 + m_4 + n_3,\qquad c = (m_1 + n_1)(m_4 + n_3) - r n_1 \sigma_1.
\]
It is readily observed that the first five eigenvalues are always negative under the natural assumption that the parameters of system~\eqref{eq1} are positive. On the other hand, $\lambda_6$ is negative if and only if $c > 0$. This prescribes a condition on the parameters of the system, namely that $(m_1 + n_1)(m_4 + n_3) > r n_1 \sigma_1$. We conclude that the origin is a stable equilibrium point if and only if~\eqref{eq:eigenvorigin3} is satisfied. 
\end{proof}

\end{document}